\documentclass{article}
\usepackage[english]{babel}

\usepackage{theorem}
\usepackage{latexsym}
\usepackage{amsfonts}
\usepackage{amssymb}
\usepackage{multicol}

\newenvironment{proof}{\noindent{\it Proof. }}{\fbox{}\\}
\newtheorem{theorem}{Theorem}
\newtheorem{proposition}{Proposition}
\newtheorem{lemma}{Lemma}
\newtheorem{corollary}{Corollary}
{\theorembodyfont {\rmfamily}
\newtheorem{remark}{Remark}
\newtheorem{definition}{Definition}
\newtheorem{example}{Example}}

\setlength{\textwidth}{14 cm}

\begin{document}


\title{\bf{Stone-Weierstrass type theorems for large deviations}}

\author{Henri Comman\thanks{Department of Mathematics, University
of Santiago de Chile, Bernardo O'Higgins 3363, Santiago, Chile.
E-mail: hcomman@usach.cl}}

\date{}
\maketitle

\begin{abstract}
We give a general version of Bryc's theorem valid on any topological
space and with any algebra $\mathcal{A}$ of real-valued continuous
functions separating the points, or any well-separating class. In
absence of exponential tightness, and  when the underlying space is
locally compact regular and  $\mathcal{A}$ constituted by  functions
vanishing at infinity, we give a sufficient condition on the
functional $\Lambda(\cdot)_{\mid \mathcal{A}}$ to get large
deviations with not necessarily tight rate function. We obtain the
general variational form of any rate function on a completely
regular space; when either exponential tightness holds or  the space
is locally compact Hausdorff, we get it in terms of any algebra as
above.  Prohorov-type theorems are  generalized to any space, and
when it  is locally compact regular the exponential tightness
can be replaced by a (strictly weaker) condition on
$\Lambda(\cdot)_{\mid \mathcal{A}}$.
\end{abstract}

\bigskip

\hspace{10pt}\textbf{Key words}: Large deviations, rate function,
Bryc's theorem.

\hspace{10pt}2000 MSC. 60F10

\section{Introduction}

 Let $(\mu_{\alpha})$ be a net of
 Borel probability measures on a  topological space
 $X$ where compact sets are Borel sets, and let
  $(t_{\alpha})$  be a net in $]0,+\infty[$ converging to $0$. For
  any $[-\infty,+\infty[$-valued Borel measurable
  function $h$ on $X$, we write
$\mu_{\alpha}^{t_{\alpha}}(e^{h/t_{\alpha}})$ for $(\int_X
e^{h(x)/t_{\alpha}}\mu_{\alpha}(dx))^{t_{\alpha}}$, and define
$\overline\Lambda(h)=\log\limsup\mu_{\alpha}^{t_{\alpha}}(e^{h/t_{\alpha}})$,
$\underline\Lambda(h)=\log\liminf\mu_{\alpha}^{t_{\alpha}}(e^{h/t_{\alpha}})$,
and $\Lambda(h)=\log\lim\mu_{\alpha}^{t_{\alpha}}(e^{h/t_{\alpha}})$
when this limit exists (throughout this paper, "existence" of
$\Lambda(h)$ means existence in $[-\infty,+\infty]$). The Bryc's
theorem asserts that under exponential tightness hypothesis and when
$X$ is completely regular Hausdorff, the existence of
$\Lambda(\cdot)$ on the set $\mathcal{C}_b(X)$ of real-valued
bounded continuous functions on $X$, implies a large deviation
principle with rate function
$J(x)=\sup_{h\in\mathcal{C}_b(X)}\{h(x)-\Lambda(h)\}$ for all $x\in
X$ (\cite{bry}, \cite{dem}).

We prove here that this theorem  is still true replacing
$\mathcal{C}_b(X)$ by the bounded above part $\mathcal{A}_{ba}$ of
any algebra $\mathcal{A}$ of real-valued (not necessarily bounded)
continuous functions separating the points, or any well-separating
class; the rate function is then given  by
\begin{equation}\label{intro-eq1}
\forall x\in X,\ \ \ \ \
J(x)=\sup_{h\in\mathcal{A}_{ba}}\{h(x)-\Lambda(h)\}=
\sup_{h\in\mathcal{A}}\{h(x)-\underline{\Lambda}(h)\}=
\sup_{h\in\mathcal{A}}\{h(x)-\overline{\Lambda}(h)\},
\end{equation}
 and we can replace in the above expression $\mathcal{A}$ by its negative
 part $\mathcal{A}_-$
  when $\mathcal{A}$  does not vanish identically at any point of $X$
  (in particular when $\mathcal{A}$ contains the constants); this is the  most tricky part
   because of  the lack of stability by
   translation of $\mathcal{A}_-$.
 As a
consequence we obtain the variational form of any rate function
(under exponential tightness) on such a space  in terms of any such
a set $\mathcal{A}$; without assuming exponential tightness, we get
it only in terms of $\mathcal{C}(X)$ and $\mathcal{C}(X)_-$
(Corollary \ref{rate-funct-Polish}).

In fact, the main result  (Theorem \ref{Stone-W-exp-tight}) gives a
general version valid for any topological space $X$ with
 the  restriction that (\ref{intro-eq1}) holds only on  the
 "completely regular
 part" $X_0$ of $X$; however, (\ref{intro-eq1}) determines completely the rate function when
 $X$ is regular.
 Various kinds of
 hypotheses are proposed, which all reduce to exponential
   tightness in the completely regular (not necessarily Hausdorff)
   case; for instance, the simpler one requires  that  the tightening compact sets
   be
    included in
   $X_0$. This allows us to extend some known results  by relaxing
topological assumptions on the space: Corollary
\ref{general-Prohorov-LDP}  is a Prohorov-type result valid on any
topological space, when preceding versions assumed complete
regularity and Hausdorffness; Corollary \ref{appli-tvs} consider the
well-separating class constituted by continuous affine functionals
on any real Hausdorff topological vector space, when a preceding
version assumed metrizability.

When exponential tightness fails, but assuming $X$ locally compact
regular (not necessarily Hausdorff) and  $\mathcal{A}$ an algebra of
continuous functions vanishing at infinity which separates the
points and does not vanish identically at any point of $X$, we give
a necessary and sufficient condition on $\Lambda(\cdot)_{\mid
\mathcal{A}}$  to get large deviations with a rate function
satisfying a property weaker than the tightness, and having still
the form (\ref{intro-eq1}) (Theorem \ref{Stone-Wei-no-tight}). A
similar condition allows to get the large deviation principle  for
some subnets (Corollary \ref{local-compact-tight-free-Prohorov}).
When $X$ is moreover Hausdorff we show that a large deviation
principe is always governed by the rate function (\ref{intro-eq1})
with any $\mathcal{A}$ as above (Corollary
\ref{loc-compact-rate-funct}).

This is achieved by applying the  results of \cite{com-JTP-05} and
\cite{com-TAMS-03}; we use in particular the notion of approximating
class, that is a set of functions on   which the existence of
$\Lambda(\cdot)$  implies  large deviations lower bounds with a
function  satisfying (\ref{intro-eq3}). When $X$ is completely
regular, the upper bounds on compact sets hold also with the
function  (\ref{intro-eq3}), which turns to be the rate function
when large deviations hold; in the general case  some extra
conditions are required in order that (\ref{intro-eq3}) be a rate
function on $X_0$. Note that in absence of regularity the
identification of a rate function is quite difficult by the lack of
uniqueness.

More precisely,
 an approximating class is a set $\mathcal{T}$ of
$[-\infty,+\infty[$-valued continuous functions on $X$ such that
 for each $x\in X$,  each open set
$G$ containing $x$, each real $s>0$, and each real $t>0$, there
exists $h\in\mathcal{T}$ satisfying
\[
e^{-t}1_{\{x\}}\le e^{h}\le 1_G\vee e^{-s}.
\]
In \cite{com-JTP-05} we proved  that if $\Lambda(\cdot)$ exists on
$\mathcal{T}$ and under some extra condition (namely, $(iii)$ of
Proposition \ref{rate-funct}),  then
 $(\mu_{\alpha})$ satisfies a large deviation principle with powers
$(t_{\alpha})$ and  a rate function verifying
\begin{equation}\label{intro-eq3}
J(x)=\inf_{t>0}\sup_{\{h\in\mathcal{T}^{T}:h(x)\ge -t\}}
 \{-\Lambda(h)\}\ \ \ \ \ \ \ \textnormal{\ for all\ }x\in X_0,
 \end{equation}
 where $\mathcal{T}^T$ denotes the set of elements in
 $\mathcal{T}$ satisfying the usual tail condition of
 Varadhan's theorem. We first improve that by  showing that the $\sup$ in
 (\ref{intro-eq3})
 can
be taken on $\mathcal{T}$ and  $\mathcal{T}_-$ in place of
$\mathcal{T}^T$, and $\Lambda(h)$ can be replaced by
$\overline\Lambda(h)$ and  $\underline\Lambda(h)$ (Proposition
\ref{rate-funct}). This result is used in a crucial way in the
sequel. Indeed, we first  show that the existence of
$\Lambda(\cdot)$ on $\mathcal{A}_{ba}$ implies the existence of
$\Lambda(\cdot)$ on $\mathcal{C}(X)_-$; next, we get large
deviations with rate function satisfying (\ref{intro-eq1}) on $X_0$
by applying
 Proposition \ref{rate-funct}
with  the approximating class $\mathcal{C}(X)$. The case with
well-separating class is proved similarly.

The paper is organized as follows. Section
\ref{section-rate-function} gives  the general form of a rate
function in terms of an approximating class, strengthening a result
of \cite{com-JTP-05}. In Section \ref{section-main-result} we
establish  the general versions of Bryc's theorem; an example of
Hausdorff  regular  space where the usual form does not work only
because of one point, but satisfying our general hypotheses is
given. In Section \ref{section-locally-compact} we study the case
where $X$ is locally compact regular.

\section{General form of a rate function}\label{section-rate-function}

Throughout the paper $X$ denotes a topological space in which
compact sets are Borel sets (in particular, no separation axiom is
required) and  $\mathcal{C}(X)$ denotes the set of all real-valued
continuous functions on $X$. We exclude the trivial case where $X$
is reduced to one point. We recall that a set
$\mathcal{C}\subset\mathcal{C}(X)$ "separates
 the points of $X$" if for any pair of points $x\neq y$ in $X$ there exists
 $h\in\mathcal{C}$ such that $h(x)\neq h(y)$. By
 "$\mathcal{C}$ does not vanish identically at any point of $X$" we
 mean that for any $x\in X$ there exists $h\in\mathcal{C}$ such that $h(x)\neq
 0$. Note that any well-separating class
  satisfies the two above
 properties.
 Let $\mathcal{F}$, $\mathcal{G}$, $\mathcal{K}$ denote respectively
 the set of closed, open, and compact subsets of $X$, and let $l$ be
 a $[0,+\infty]$-valued function on $X$. We say that  $(\mu_{\alpha})$ satisfies   the large deviation upper
(resp. lower) bounds with  powers $(t_{\alpha})$ and  function $l$
if
\begin{equation}\label{defi-upper-ldp}
\limsup\mu_{\alpha}^{t_{\alpha}}(F)\le\sup_{x\in F}e^{-l(x)}\ \ \ \
\ \ \ \textnormal{for all  $F\in\mathcal{F}$},
\end{equation}
\begin{equation}\label{defi-lower-ldp}
(\textnormal{resp. \ \ \ \ \ } \sup_{x\in G}e^{-l(x)}\le
 \liminf\mu_{\alpha}^{t_{\alpha}}(G)\ \ \
\ \ \ \ \textnormal{for all  $G\in\mathcal{G}$}).
\end{equation}
When (\ref{defi-upper-ldp}) (resp. (\ref{defi-upper-ldp}) with
$\mathcal{K}$ in place of $\mathcal{F}$) and  (\ref{defi-lower-ldp})
hold, we say that $(\mu_{\alpha})$ satisfies a  large deviation
principle (resp. vague large deviation principle)   with powers
$(t_{\alpha})$; in this case, the lower-regularization of $l$ (i.e.,
the greatest lower semi-continuous function lesser than $l$) is
called a rate function, which is said to be tight when it has
compact level sets.  As it is well known (\cite{dem}, Lemma 4.1.4
and Remark pp. 118), when $X$ is regular a rate function is uniquely
determined and coincides with the function $l_0$ defined by
\[
l_0(x)= -\log\inf\{\limsup\mu_{\alpha}^{t_{\alpha}}(G):
G\in\mathcal{G}, G\ni x\}\ \ \ \ \ \textnormal{for all $x\in X$}.
\]

The following proposition will be used in the sequel since we will
deal with functions which do not necessarily satisfy the
 usual tail condition
\begin{equation}\label{tail}
 \lim_{M\rightarrow\infty}\limsup\mu_{\alpha}^{t_{\alpha}}
(e^{h/t_{\alpha}}1_{\{e^h\ge M\}})=0.
\end{equation}
It is easy to see that it generalizes Varadhan's theorem; indeed,
since for each $[-\infty,+\infty[$-valued Borel function $h$ on $X$,
 for each
 subnet $(\mu_{\beta}^{t_{\beta}})$ of
$(\mu_{\alpha}^{t_{\alpha}})$  and each real $M$ we have
\[\limsup
\mu_{\beta}^{t_{\beta}}(e^{h/t_{\beta}})=\limsup
\mu_{\beta}^{t_{\beta}}(e^{h/t_{\beta}}1_{\{h<M\}})\vee\limsup
\mu_{\beta}^{t_{\beta}}(e^{h/t_{\beta}}1_{\{h\ge M\}}),\]
 it
follows that when $h\in \mathcal{C}(X)$ and satisfies (\ref{tail}),
by letting $M\rightarrow+\infty$  Proposition
\ref{extension-Varadh} implies $\limsup
\mu_{\beta}^{t_{\beta}}(e^{h/t_{\beta}})=\sup_{x\in X}e^{h(x)-l(x)}$
hence the existence of $\Lambda(h)$ (since this expression  does not
depend on the subnet along which the upper limit is taken).

\begin{proposition}\label{extension-Varadh}
If  the large deviation upper (resp. lower) bounds  hold with some
function $l$, then for each $h\in \mathcal{C}(X)$ and each real $M$
we have
\[
\limsup\mu_{\alpha}^{t_{\alpha}}(e^{h/t_{\alpha}}1_{\{h\le
M\}})\le\sup_{x\in X,h(x)\le M}e^{h(x)-l(x)}
\]
\[
(resp.\  \ \ \ \liminf
\mu_{\alpha}^{t_{\alpha}}(e^{h/t_{\alpha}}1_{\{h< M\}})\ge
\sup_{x\in X, h(x)< M}e^{h(x)-l(x)}).
\]
In particular when the upper and the lower bounds hold with $l$ we
have
\[
\lim_{M\rightarrow+\infty}\liminf\mu_{\alpha}^{t_{\alpha}}(e^{h/t_{\alpha}}1_{\{h<
M\}})=\lim_{M\rightarrow+\infty}\limsup\mu_{\alpha}^{t_{\alpha}}(e^{h/t_{\alpha}}1_{\{h\le
M\}})=\sup_{x\in X}e^{h(x)-l(x)}.
\]
\end{proposition}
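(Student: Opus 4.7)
The proof is a truncated variant of the standard Laplace--Varadhan argument, with the indicator $1_{\{h\le M\}}$ (resp. $1_{\{h<M\}}$) kept in place throughout. For the upper bound, fix a mesh $\delta>0$, an integer $N\ge 1$, and (after harmless rounding) assume $K\delta=M$. Decompose
\[
\{h\le M\}=\{h<-N\delta\}\cup\bigcup_{k=-N}^{K-1}\{k\delta\le h<(k+1)\delta\},
\]
where each slice is contained in the closed set $F_k=\{k\delta\le h\le(k+1)\delta\}$. On $F_k$ the integrand is bounded by $e^{(k+1)\delta/t_\alpha}$, so (\ref{defi-upper-ldp}) applied to the closed set $F_k$ yields
\[
\limsup\mu_\alpha^{t_\alpha}(e^{h/t_\alpha}1_{F_k})\le e^{(k+1)\delta}\sup_{x\in F_k}e^{-l(x)}\le e^{\delta}\sup_{x:h(x)\le M}e^{h(x)-l(x)},
\]
while on $\{h<-N\delta\}$ the integrand is at most $e^{-N\delta/t_\alpha}$, contributing at most $e^{-N\delta}$ after taking $t_\alpha$-th powers. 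Combining the finitely many pieces via the elementary fact that $\limsup(\sum_{i=1}^{n}a_i^{(\alpha)})^{t_\alpha}=\max_i\limsup(a_i^{(\alpha)})^{t_\alpha}$ (since $n^{t_\alpha}\to 1$), I would then let $N\to\infty$ and $\delta\to 0$.

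For the lower bound, I would fix $x$ with $h(x)<M$ and $\epsilon>0$ small enough that $h(x)+\epsilon<M$. The set $G_\epsilon=\{y:|h(y)-h(x)|<\epsilon\}$ is open (by continuity of $h$), contains $x$, and is contained in $\{h<M\}$; on it, $e^{h/t_\alpha}\ge e^{(h(x)-\epsilon)/t_\alpha}$. Applying (\ref{defi-lower-ldp}) to $G_\epsilon$ gives
\[
\liminf\mu_\alpha^{t_\alpha}(e^{h/t_\alpha}1_{\{h<M\}})\ge e^{h(x)-\epsilon}\liminf\mu_\alpha(G_\epsilon)^{t_\alpha}\ge e^{h(x)-\epsilon-l(x)}.
\]
Taking the supremum over such $x$ and then $\epsilon\downarrow 0$ produces the claim.

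For the joint statement when both bounds hold, I would observe that both $\mu_\alpha^{t_\alpha}(e^{h/t_\alpha}1_{\{h<M\}})$ and $\mu_\alpha^{t_\alpha}(e^{h/t_\alpha}1_{\{h\le M\}})$ are non-decreasing in $M$, and that for each $M$ the former is dominated by the latter. Combined with the two inequalities already proved, together with $\sup_{x:h(x)\le M}e^{h(x)-l(x)}\nearrow\sup_{x\in X}e^{h(x)-l(x)}$ (and similarly for strict inequality), the common value $\sup_x e^{h(x)-l(x)}$ sandwiches both $\lim_M\liminf$ and $\lim_M\limsup$.

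The only genuinely delicate point is handling the $t_\alpha$-th-power of the finite sum cleanly when $\sup_x e^{h(x)-l(x)}=0$; in that case each $\sup_{F_k}e^{-l}$ vanishes, so every slice contributes $0$ to the $\limsup$ and the tail $\{h<-N\delta\}$ still contributes at most $e^{-N\delta}\to 0$, so the argument goes through without change. Apart from this bookkeeping, the proof is a routine adaptation of the classical Varadhan estimate, and I do not anticipate any substantive obstacle.
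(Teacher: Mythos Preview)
Your argument is correct; the slicing of $\{h\le M\}$ into finitely many level sets $F_k=\{k\delta\le h\le(k+1)\delta\}$, the tail piece $\{h<-N\delta\}$, and the finite-sum Laplace identity handle the upper bound cleanly, and the localization near a fixed $x$ via the open set $G_\epsilon=\{|h-h(x)|<\epsilon\}\subset\{h<M\}$ gives the lower bound directly from (\ref{defi-lower-ldp}). The sandwich for the $M\to\infty$ statement is also fine. However, your route is genuinely different from the paper's. The paper does not slice at all: it invokes Theorem~3.1 of \cite{com-TAMS-03}, which represents $\limsup\mu_\alpha^{t_\alpha}(e^{k/t_\alpha})$ (and likewise the $\liminf$ along a suitable subnet) as a supremum of terms of the form $(e^{k(x)}-\varepsilon)\limsup\mu_\alpha^{t_\alpha}(F_{e^{k(x)},\varepsilon})$ over the closed (resp.\ open) ``shells'' $F_{e^{k(x)},\varepsilon}$ (resp.\ $G_{e^{k(x)},\varepsilon}$); the large deviation bounds are then applied to these shells, and the upper bound is obtained by contradiction. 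Your approach is more elementary and entirely self-contained---it is the classical Varadhan argument kept on $\{h\le M\}$---whereas the paper's approach leverages a general capacity-type representation that the author had already established elsewhere and uses throughout. Both yield the same conclusion with essentially the same input (only continuity of $h$ and the bounds (\ref{defi-upper-ldp})--(\ref{defi-lower-ldp})); yours avoids the external citation at the cost of the $\delta$--$N$ bookkeeping, while the paper's keeps the computation short by outsourcing the discretization to \cite{com-TAMS-03}.
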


\begin{proof}
For each $[-\infty,+\infty[$-valued Borel measurable function $h$ on
$X$, each $\varepsilon>0$, and each $x\in X$,  we put
$G_{e^{h(x)},\varepsilon}=\{y\in X:
e^{h(x)}-\varepsilon<e^{h(y)}<e^{h(x)}+\varepsilon\}$ and
$F_{e^{h(x)},\varepsilon}=\{y\in X: e^{h(x)}-\varepsilon\le
e^{h(y)}\le e^{h(x)}+\varepsilon\}$. First assume that the large
deviation lower bounds  hold with $l$.
 For each real $M$, by Theorem
3.1 of \cite{com-TAMS-03} applied to the function
$k=h1_{\{h<M\}}-\infty 1_{\{h\ge M\}}$ (with the convention
"$\infty\cdot 0=0$"),  there exists a subnet
$(\mu_{\beta}^{t_{\beta}})$ of $(\mu_{\alpha}^{t_{\alpha}})$ such
that
\[\liminf\mu_{\alpha}^{t_{\alpha}}(e^{k/t_{\alpha}}1_{\{h<M\}})=
\limsup\mu_{\beta}^{t_{\beta}}(e^{k/t_{\beta}}1_{\{h<M\}})\]\[
\sup_{\{x\in X, \varepsilon>0: h(x)< M
\}}\{(e^{k(x)}-\varepsilon)\limsup\mu_{\beta}^{t_{\beta}}(
G_{e^{k(x)},\varepsilon}\cap\{h<M\})\}.\] Since
 $k$ coincides with $h$ on $\{h<M\}$ we have
 $G_{e^{k(x)},\varepsilon}\cap\{h<M\}=G_{e^{h(x)},\varepsilon}\cap\{h<M\}$,
hence
\[\liminf\mu_{\alpha}^{t_{\alpha}}(e^{h/t_{\alpha}}1_{\{h<M\}})=\limsup
\mu_{\beta}^{t_{\beta}}(e^{h/t_{\beta}}1_{\{h<M\}})=\]
\[\sup_{\{x\in X, \varepsilon>0: h(x)< M
\}}\{(e^{h(x)}-\varepsilon)\limsup\mu_{\beta}^{t_{\beta}}(
G_{e^{h(x)},\varepsilon}\cap\{h<M\})\}\ge\sup_{x\in X,
h(x)<M\}}e^{h(x)}e^{-l(x)}\]
 (where the last inequality follows from the large deviations lower
bounds), which proves the lower bounds case. Assume now that
 the large deviation upper bounds  hold with $l$, and
suppose that
\[\limsup\mu_{\alpha}^{t_{\alpha}}
(e^{h/t_{\alpha}}1_{\{e^h\le M\}})>\sup_{x\in X, h(x)\le
M}e^{h(x)-l(x)}\]for some real  $M$.  Applying Theorem 3.1 of
\cite{com-TAMS-03} as above  with $F_{e^{h(x)},\varepsilon}$  in
place of $G_{e^{h(x)},\varepsilon}$ yields
\[\limsup
\mu_{\alpha}^{t_{\alpha}}(e^{h/t_{\alpha}}1_{\{h\le
M\}})=\sup_{\{x\in X, \varepsilon>0: h(x)\le M
\}}\{(e^{h(x)}-\varepsilon)\limsup\mu_{\alpha}^{t_{\alpha}}(
F_{e^{h(x)},\varepsilon}\cap\{h\le M\})\},\] and therefore
 there  exists
$x\in X$  and $\varepsilon>0$ such that
\[
(e^{h(x)}-\varepsilon)\limsup\mu_{\alpha}^{t_{\alpha}}(
F_{e^{h(x)},\varepsilon}\cap\{h\le M\})>\sup_{x\in X, h(x)\le M}
e^{h(x)-l(x)}.
\]
By the  large deviation upper-bounds we have
\[
(e^{h(x)}-\varepsilon)\sup_{y\in F_{e^{h(x)},\varepsilon}\cap\{h\le
M\}} e^{-l(y)}>\sup_{x\in X, h(x)\le M} e^{h(x)-l(x)},
\]
and so there exists $x'\in F_{e^{h(x)},\varepsilon}\cap\{h\le M\}$
such that
\[
(e^{h(x)}-\varepsilon) e^{-l(x')}>\sup_{x\in X, h(x)\le M}
e^{h(x)}e^{-l(x)}.\]Since $e^{h(x')}\ge e^{h(x)}-\varepsilon$ we
obtain $e^{h(x')} e^{-l(x')}>\sup_{x\in X, h(x)\le M} e^{h(x)-l(x)}$
and the contradiction, which  proves the upper bounds case.
\end{proof}

We recall here the definition of an approximation class, which
involves the set $X_0$ constituted by  the points $x\in X$ which can
be suitably separated  by a continuous function from any closed set
not containing $x$. Note that ${\mathcal{C}(X)}_-$ is an
approximating class for any space $X$. It is known that $X_0=X$ if
and only if $X$ is completely regular (\cite{com-JTP-05},
Proposition 1). At the other extreme,  $X_0=\emptyset$ when
$\mathcal{C}(X)$ is reduced to constants and  $X$ is  a $T_0$
space containing more than one point, as it may occur with some
regular Hausdorff spaces (\cite{dou}). Note also that the negative
part $\mathcal{A}_-$ of any approximating class $\mathcal{A}$ is
again an approximating class.

\begin{definition}\label{defi-approx-class}
Let $X_0$ be the set of points $x$ of $X$ such that for each
$G\in\mathcal{G}$ containing $x$, each real $s>0$, and each real
$t>0$, there exists $h\in\mathcal{C}_b(X)$
 such that
\begin{equation}\label{defi-approx-class-eq0}
e^{-t}1_{\{x\}}\le e^{h}\le 1_G\vee e^{-s}.
\end{equation}
A class $\mathcal{T}$ of $[-\infty,+\infty[$-valued continuous
functions on $X$
 is said to be  approximating if for each $x\in X_0$,  each
$G\in\mathcal{G}$ containing $x$, each real $s>0$, and each real
$t>0$, $\mathcal{T}$ contains some function  satisfying
(\ref{defi-approx-class-eq0}).
\end{definition}

We introduce now a strong  variant of  exponential tightness by
requiring that the tightening  compact sets  be included in $X_0$.
Of course, it coincides with the usual one in the completely regular
  case.

\begin{definition}\label{extension-exp-tightness}
 The net $(\mu_\alpha)$ is
$X_0$-exponentially tight with respect to $(t_\alpha)$  if for each
$\varepsilon>0$ there exists a compact set $K\subset X_0$ such that
$\limsup\mu_{\alpha}^{t_{\alpha}}(X\verb'\'K)<\varepsilon$.
\end{definition}

To any approximating class $\mathcal{T}$ we  associate the function
$l_{\mathcal{T}}$ defined by

\begin{equation}\label{rate-funct-eq0}
l_{\mathcal{T}}(x)=\left\{
\begin{array}{ll}
\inf_{t>0}\sup_{\{h\in\mathcal{T}^{T}:h(x)\ge -t\}}
 \{-\overline{\Lambda}(h)\} & if\  x\in X_0
 \\ \\
\sup_{G\supset\{x\},G\in\mathcal{G}}\sup_{0<s<\infty}
\inf_{\mathcal{A}_{G,s}}
 \{-\overline{\Lambda}(h)\} & if\  x\in X\verb'\' X_0,
 \end{array}
 \right.
 \end{equation}
 where $\mathcal{T}^T$ denotes the elements $h\in\mathcal{T}$
 satisfying the tail condition (\ref{tail}), and
\[
\mathcal{A}_{G,s}=\bigcup_{x\in G\cap X_0
,t>0}\{h\in\mathcal{A}:e^{h}\le 1_{G}\vee e^{-s}, h(x)\ge -t\}\ \ \
\textnormal{\ for all\ }G\in\mathcal{G}\ \textnormal{and}\
s\in]0,+\infty[.
 \]

In Theorem 3 of \cite{com-JTP-05} we proved that  the existence of
$\Lambda(\cdot)$ on $\mathcal{T}$   together with the condition
 $(iii)$ below  imply a large
 deviation principle  with rate function $l_{\mathcal{T}}$; in fact,
it is easy to verify that  the existence of $\Lambda(\cdot)$ on
$\mathcal{T}_-$ together with
 $(iii)$ are   sufficient.
 The following proposition  shows that we can replace
 $\mathcal{T}^{T}$ by $\mathcal{T}$ (resp. $\mathcal{T}_-$)  and
 $\overline{\Lambda}(h)$ by $\underline{\Lambda}(h)$ in
(\ref{rate-funct-eq0})
 for the case
  $x\in X_0$.
We can even replace $\overline{\Lambda}(h)$ and
$\underline{\Lambda}(h)$ by $\lim_M \underline{\Lambda}(h_M)$, where
$h_M=h 1_{\{h<M\}}-\infty 1_{\{h\ge M\}}$ for all $h\in\mathcal{T}$
and all reals $M$.
 This  will be  used in the next section
  in order to obtain the expression of the
 rate function in terms of the whole algebra (or the well-separating
 class)
   since this one may contain
 unbounded functions.

\begin{proposition}\label{rate-funct}
Consider
 the following
statements:
 \begin{itemize}
 \item[(i)] $(\mu_\alpha)$ is
$X_0$-exponentially tight with respect to $(t_\alpha)$;
\item[(ii)] $(\mu_\alpha)$ is
exponentially tight with respect to $(t_\alpha)$ and ${l_0}_{\mid
X\verb'\'X_0}=+\infty$;
 \item[(iii)] For all $F\in\mathcal{F}$,
  for all open covers $\{G_i:i\in I\}$ of $F\cap X_0$ and for all $\varepsilon>0$,
  there exists a finite set $\{G_{i_1},...,G_{i_N}\}\subset\{G_i:i\in I\}$ such that
 \[
\limsup\mu_{\alpha}^{t_{\alpha}}({F})-\limsup\mu_{\alpha}^{t_{\alpha}}(\bigcup_{1\le
j\le N}\overline{G_{i_j}})<\varepsilon.
\]
 \end{itemize}The following conclusions hold.
 \begin{itemize}
 \item[(a)] $(i)\Rightarrow(iii)$,  $(ii)\Rightarrow(iii)$, and $(i)\Rightarrow(ii)$ when
 $\mathcal{K}\subset\mathcal{F}$.
 \item[(b)]  If $(iii)$ holds and $\Lambda(\cdot)$ exists on the
 negative part $\mathcal{T}_-$ of an approximating class
 $\mathcal{T}$,
   then $(\mu_{\alpha})$ satisfies a large deviation principle with
powers $(t_{\alpha})$ and rate function $l_{\mathcal{T}}$. Moreover,
$l_{\mathcal{T}}=l_{\mathcal{T}_-}$ and
\begin{equation}\label{rate-funct-eq1}
\forall x\in X_0,\ \ \ \ \ l_{\mathcal{T}}(x)=
\inf_{t>0}\sup_{\{h\in\mathcal{T}_-:h(x)\ge -t\}}
 \{-{\Lambda}(h)\}=\inf_{t>0}\sup_{\{h\in\mathcal{T}:h(x)\ge -t\}}
 \{\lim_{M\rightarrow+\infty}-\underline{\Lambda}(h_M)\}.
 \end{equation}
   If only one  rate function can govern the large deviations, then
\[
\forall x\in X,\ \ \ \ \
l_{\mathcal{T}}(x)=\sup_{G\in\mathcal{G},G\ni x}\inf_{y\in G\cap
X_0}l_{\mathcal{T}}(y).
 \]
 \end{itemize}
 \end{proposition}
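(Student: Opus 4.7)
The plan is to treat $(a)$ and $(b)$ separately, with the heart of the argument lying in the rewriting of the rate-function formula in $(b)$.

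Part $(a)$ consists of routine implications. For $(i)\Rightarrow(iii)$, choose a compact $K\subset X_0$ with $\limsup\mu_\alpha^{t_\alpha}(X\setminus K)<\varepsilon$; then $F\cap K$ is compact and contained in $F\cap X_0$, hence covered by finitely many $G_{i_1},\dots,G_{i_N}$, and the split $F=(F\cap\bigcup_j G_{i_j})\cup(F\cap K^c)$ yields the required bound. The implication $(ii)\Rightarrow(iii)$ is analogous, using exponential tightness to produce $K$, together with ${l_0}_{\mid X\setminus X_0}=+\infty$ to guarantee that the portion of any open cover that meets $X\setminus X_0$ contributes only exponentially negligibly. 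For $(i)\Rightarrow(ii)$ under $\mathcal{K}\subset\mathcal{F}$, any tightening compact $K\subset X_0$ is closed, so $K^c$ is an open neighborhood of every $x\in X\setminus X_0$, which forces $l_0(x)=+\infty$.

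For $(b)$, the existence of an LDP with rate function $l_\mathcal{T}$ is obtained by invoking Theorem 3 of \cite{com-JTP-05}, using the remark (made just before the proposition) that the existence of $\Lambda(\cdot)$ on $\mathcal{T}_-$ is already enough. Since $\mathcal{T}_-$ is itself an approximating class, applying the same result to $\mathcal{T}_-$ delivers an LDP with rate function $l_{\mathcal{T}_-}$. To establish the identities (\ref{rate-funct-eq1}), the key tool is Proposition \ref{extension-Varadh}: once the LDP holds with a rate function $l$, one gets, for every $h\in\mathcal{C}(X)$,
\[
\lim_{M\to+\infty}-\underline{\Lambda}(h_M)=\inf_{y\in X}\{l(y)-h(y)\}.
\]
Fixing $x\in X_0$, the inequality $\inf_y\{l_\mathcal{T}(y)-h(y)\}\le l_\mathcal{T}(x)+t$ (available whenever $h(x)\ge -t$) immediately yields one direction. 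For the reverse, given $N<l_\mathcal{T}(x)$, lower semi-continuity of $l_\mathcal{T}$ produces an open $G\ni x$ on which $l_\mathcal{T}>N$; the approximating-class property then supplies $h\in\mathcal{T}_-$ (hence also $h\in\mathcal{T}$) with $e^{-t}1_{\{x\}}\le e^{h}\le 1_G\vee e^{-s}$, and a direct estimate gives $\inf_y\{l_\mathcal{T}(y)-h(y)\}\ge\min(N,s)$. Letting $s\to\infty$ and $N\to l_\mathcal{T}(x)$ closes the gap.

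For the version on $\mathcal{T}_-$ with $-\Lambda(h)$, observe that when $h\le 0$ and $M>0$ one has $h_M=h$, so $\lim_M\underline{\Lambda}(h_M)=\Lambda(h)$ (which exists by hypothesis); the same argument then produces the formula and, by comparing the two expressions, gives $l_\mathcal{T}=l_{\mathcal{T}_-}$ on $X_0$, with the extension off $X_0$ coming from the very definition of $l_\mathcal{T}$ in terms of the $\mathcal{A}_{G,s}$ sets together with the approximating-class bounds. The main obstacle is precisely the passage from $\mathcal{T}^T$ to all of $\mathcal{T}$: for unbounded $h\in\mathcal{T}\setminus\mathcal{T}^T$ we have no a priori existence of $\Lambda(h)$, so Varadhan's theorem is unavailable, and Proposition \ref{extension-Varadh} is the truncated substitute that lets us compute $\inf_y\{l_\mathcal{T}(y)-h(y)\}$ as $-\lim_M\underline{\Lambda}(h_M)$. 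The last displayed identity, under the uniqueness assumption, is then a standard lower-semi-continuous regularization: the unique rate function $l_0$ coincides with $l_\mathcal{T}$ on $X_0$, and its value at $x\in X\setminus X_0$ is recovered from the values on nearby $X_0$-points via $\sup_{G\ni x}\inf_{y\in G\cap X_0}l_\mathcal{T}(y)$.
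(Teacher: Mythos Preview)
Your treatment of part $(a)$ and the final assertion of $(b)$ follows the paper closely (though your handling of $(ii)\Rightarrow(iii)$ is sketchier: the paper augments $\{G_i\}$ by small open neighborhoods $G_x$ of each $x\in X\setminus X_0$, available because $l_0(x)=+\infty$, to obtain a full open cover of $F$ before extracting a finite subcover of $F\cap K$).

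Where you genuinely diverge is in establishing (\ref{rate-funct-eq1}). The paper proves only one inequality directly: taking any $\nu>\sup_{t>0}\inf_{\{h\in\mathcal{T}:h(x)\ge -t\}}\lim_M\underline{\Lambda}(h_M)$, it uses Proposition~\ref{extension-Varadh} to get $\nu\ge h_t(x)-l_{\mathcal{T}}(x)\ge -t-l_{\mathcal{T}}(x)$ and lets $t\to 0$. For the reverse inequality it does \emph{not} construct approximating functions, but instead invokes the already-known formula $-l_{\mathcal{T}_-}(x)=\sup_{t>0}\inf_{\{h\in\mathcal{T}_-:h(x)\ge -t\}}\overline{\Lambda}(h)$ coming from Theorem~3 of \cite{com-JTP-05}, together with the observation $\mathcal{T}_{G,s}=(\mathcal{T}_-)_{G,s}$ (any $h$ with $e^h\le 1_G\vee e^{-s}$ is automatically nonpositive), which gives $l_{\mathcal{T}}=l_{\mathcal{T}_-}$; the trivial set inclusion $\mathcal{T}_-\subset\mathcal{T}$ then closes the sandwich. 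Your route instead proves both inequalities from scratch: the upper bound via $\inf_y\{l_{\mathcal{T}}(y)-h(y)\}\le l_{\mathcal{T}}(x)+t$, and the lower bound by exploiting lower semi-continuity of $l_{\mathcal{T}}$ and the approximating-class inequality $e^{-t}1_{\{x\}}\le e^h\le 1_G\vee e^{-s}$ to manufacture $h\in\mathcal{T}_-$ with $\inf_y\{l_{\mathcal{T}}(y)-h(y)\}\ge\min(N,s)$. Your argument is more self-contained (it does not rely on the explicit rate-function formula from \cite{com-JTP-05}, only on the existence of the LDP), while the paper's is shorter because it recycles that formula. Both are valid; note that your equality $l_{\mathcal{T}}=l_{\mathcal{T}_-}$ off $X_0$ ultimately rests on the same observation $\mathcal{T}_{G,s}=(\mathcal{T}_-)_{G,s}$, which you should make explicit.
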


\begin{proof}
$(i)\Rightarrow(ii)$ is clear when
 $\mathcal{K}\subset\mathcal{F}$, since in this case we have for
 each
 compact $K\subset X_0$,
 \[\limsup\mu_{\alpha}^{t_{\alpha}}(X\verb'\'K)\ge
 \sup_{X\verb'\'K}e^{-l_0}\ge\sup_{X\verb'\'X_0}e^{-l_0}.\]
 Assume that $(ii)$ holds.
  Let $F\in\mathcal{F}$, let $\{G_i:i\in I\}$ be an open cover of
$F\cap X_0$ and let $\varepsilon>0$. By hypotheses, each $x\in
X\verb'\'X_0$ has an  open neighborhood $G_x$ such that
$\limsup\mu_\alpha^{t_\alpha}(G_x)<\varepsilon$.
 Let $K\in\mathcal{K}$
 satisfying
$\limsup\mu_\alpha^{t_\alpha}(X\verb'\'K)<\varepsilon$. Then
$\{G_i:i\in I\}\cup\{G_x:x\in X\verb'\'X_0\}$ is an open cover of
$F$ and a fortiori of $F\cap K$, hence there exists a finite set
$\{G_{i_1},...,G_{i_N},G_{x_1},...G_{x_M}\}$ covering $F\cap K$ so
that
\[\limsup\mu_{\alpha}^{t_{\alpha}}(F)=\limsup\mu_{\alpha}^{t_{\alpha}}(F\cap K)
\vee\limsup\mu_{\alpha}^{t_{\alpha}}(F\cap
X\verb'\'K)\le\limsup\mu_{\alpha}^{t_{\alpha}}(\bigcup_{1\le j\le
N}{G_{i_j}})+\varepsilon,
\]and $(iii)$ holds. If $(i)$ holds, then  the  finite cover can be obtained
from $\{G_i:i\in I\}$
 and the above expression is still valid.
This proves (a).

Assume that $(iii)$ holds and $\Lambda(\cdot)$ exists on the
negative part
 $\mathcal{T}_-$ of some
approximating class $\mathcal{T}$. By Theorem 3 of \cite{com-JTP-05}
(and the comment before Proposition \ref{rate-funct}),
$(\mu_{\alpha})$ satisfies a large deviation principle with powers
$(t_{\alpha})$ and rate function $l_{\mathcal{T}}$ defined in
(\ref{rate-funct-eq0}) with moreover
\[-\inf_{x\in G}l_{\mathcal{T}}(x)=
\inf_{0<s<\infty} \sup_{h\in\mathcal{T}_{G,s}}
 \Lambda(h)\ \ \ \ \ \textnormal{for all $G\in\mathcal{G}$}.\]
 By lower semi-continuity the above expression determines
 $l_{\mathcal{T}}$  and since
 $\mathcal{T}_{G,s}=(\mathcal{T}_-)_{G,s}$, the same reasoning with
 the approximating class
 $\mathcal{T}_-$  yields
 $l_{\mathcal{T}}=l_{\mathcal{T}_-}$. Let $x\in X_0$ and  $\nu$ be a real such that
$\nu>\sup_{t>0}\inf_{\{h\in\mathcal{T}:h(x)\ge
-t\}}\lim_M\underline{\Lambda}(h_M)$. For each $t>0$, there exists
$h_t\in\mathcal{T}$ such that $h_t(x)\ge-t$ and
$\nu>\underline{\Lambda}(h_t)$.  By Proposition
\ref{extension-Varadh} we get
\[\nu\ge\sup_{y\in X}\{h_t(y)-l_{\mathcal{T}}(y)\}\ge h_t(x)-l_{\mathcal{T}}(x)\ge -t
-l_{\mathcal{T}}(x),\]and finally   $\nu\ge-l_{\mathcal{T}}(x)$ by
letting $t\rightarrow 0$. Since $\nu$ is arbitrary, it follows that
\[-l_{\mathcal{T}}(x)\le\sup_{t>0}\inf_{\{h\in\mathcal{T}:h(x)\ge
-t\}}\lim_M\underline{\Lambda}(h_M),\]and   since
\[-l_{\mathcal{T}}(x)=-l_{\mathcal{T}_-}(x)=\sup_{t>0}\inf_{\{h\in\mathcal{T}_-:h(x)\ge
-t\}}\overline{\Lambda}(h),\] the two first assertions of (b)  are
proved. If only one rate function can govern the large deviations,
then the lower regularization
 of the function $l$ defined in Remark
\ref{bounds-with-no-rate-function}
 coincides with $l_{\mathcal{T}}$;
 consequently we have for all $x\in X$,
\[l_{\mathcal{T}}(x)=\sup_{G\in\mathcal{G},G\ni x}\inf_{y\in G} l(y)=
\sup_{G\in\mathcal{G},G\ni x}\inf_{y\in G\cap X_0}
l(y)=\sup_{G\in\mathcal{G},G\ni x}\inf_{y\in G\cap X_0}
l_{\mathcal{T}}(y).\]
\end{proof}

\begin{remark}\label{bounds-with-no-rate-function}
It is
 easy to see that $l_{\mathcal{T}}(x)=+\infty$\   for all $x\in
 X\verb'\'\overline{X_0}$\  (\cite{com-JTP-05}, Remark 1). Condition $(iii)$ in Proposition
 \ref{rate-funct} implies that
$\lim\mu_{\alpha}^{t_{\alpha}}(F)=0$ for all closed sets $F\subset
X\verb'\'X_0$. In fact, the proof of
 Proposition 3 of \cite{com-JTP-05} shows that
  the large deviation upper (resp.
lower) bounds hold also with the function $l$ defined by
\[
l(x)=\left\{
\begin{array}{ll}
l_{\mathcal{T}}(x) & \textnormal{if}\ x\in X_0
\\ \\
+\infty & \textnormal{if}\ x\in X\verb'\'X_0;
\end{array}
\right.
\]
 it follows that under exponential tightness $(ii)$ is equivalent to
$(iii)$ when $l$ is lower semi-continuous and $X$ regular.
\end{remark}

\section{Main result}\label{section-main-result}

In this section we establish  our general version  of Bryc's
theorem, whose usual algebraic statement in the completely regular
Hausdorff case is recovered by taking $\mathcal{A}=\mathcal{C}_b(X)$
in  Theorem \ref{Stone-W-exp-tight}; recall that in this case
$X_0=X$, so that the general hypotheses reduce to exponential
tightness, and (\ref{Stone-W-exp-tight-eq1}) coincides with
(\ref{intro-eq1}). The improvement is threefold: first,  it allows a
general separating algebra (resp. well-separating class)
$\mathcal{A}$; secondly, the rate function is obtained in terms of
the negative part $\mathcal{A}_-$ of $\mathcal{A}$ when
$\mathcal{A}$ does not vanish identically at any point of $X$;
finally, the results hold  for any topological space, under the
stronger hypothesis of $X_0$-exponential tightness  (or exponential
tightness plus
 some  extra conditions), and with the restriction that
the usual form of the rate function is obtained only on $X_0$.

 Let
us stress that more than the large deviation property itself, the
hard part consists  in obtaining the rate function in terms of
$\mathcal{A}$ and $\mathcal{A}_-$, respectively. To our knowledge,
up to now, in the algebraic case such formulas were known only when
$\mathcal{A}=\mathcal{C}_b(X)$ and $X$ completely regular Hausdorff.
 The proof is
heavily based on Proposition \ref{rate-funct}. For instance, it is
thanks to the expression (\ref{rate-funct-eq1}) of the rate function
together with Lemma \ref{lemma-continuity-compact-open} that we can
write
 (\ref{Stone-W-exp-tight-eq4}) and
(\ref{Stone-W-exp-tight-eq5})  leading to
 (\ref{Stone-W-exp-tight-eq1}).

\begin{lemma}\label{lemma-continuity-compact-open}
For each set $\mathcal{T}\subset\mathcal{C}(X)$ and each $x\in X$
  we have \[\sup_{t>0}\inf_{\{h\in\mathcal{T}:h(x)\ge -t\}}
\overline{\Lambda}(h)\ge\inf_{h\in\mathcal{T}}\{\overline{\Lambda}(h)-h(x)\}\]
and  \[\sup_{t>0}\inf_{\{h\in\mathcal{T}:h(x)\ge -t\}}
\lim_{M\rightarrow+\infty}\overline{\Lambda}(h_M)
\ge\inf_{h\in\mathcal{T}}\{\lim_{M\rightarrow+\infty}\overline{\Lambda}(h_M)-h(x)\}\]
with  equalities when $\mathcal{T}$ is stable by translations.
 The same holds  with $\underline{\Lambda}$ in place of
  $\overline{\Lambda}$.
\end{lemma}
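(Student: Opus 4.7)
My plan is to prove the generic ``$\ge$'' direction from a one-line elementary bound, and then to obtain the reverse direction under translation-stability by exhibiting a concrete near-minimizer built by shifting.

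First I would handle the generic inequality. Fix $x \in X$ and set $C := \inf_{h \in \mathcal{T}}\{\overline{\Lambda}(h) - h(x)\}$. For any $t > 0$ and any $h \in \mathcal{T}$ with $h(x) \ge -t$, the definition of $C$ combined with the hypothesis $h(x)\ge -t$ gives
\[
\overline{\Lambda}(h) \;\ge\; h(x) + C \;\ge\; C - t;
\]
taking the infimum over the constrained set and then the supremum over $t > 0$ produces the first displayed inequality (since $\sup_{t>0}(C-t) = C$). The identical argument with $\Gamma(h) := \lim_{M \to +\infty}\overline{\Lambda}(h_M)$ in place of $\overline{\Lambda}$ yields the second displayed inequality; this step uses nothing but the order structure of $[-\infty,+\infty]$.

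Next I would establish the equalities under translation-stability. The key input is that $\overline{\Lambda}$ shifts by constants, $\overline{\Lambda}(h+c) = \overline{\Lambda}(h) + c$, and the analogous property for $\Gamma$: the identity
\[
\int e^{(h+c)_M/t_\alpha}\,d\mu_\alpha \;=\; e^{c/t_\alpha}\int e^{h_{M-c}/t_\alpha}\,d\mu_\alpha
\]
yields $\overline{\Lambda}((h+c)_M) = c + \overline{\Lambda}(h_{M-c})$, and letting $M \to +\infty$ gives $\Gamma(h+c) = \Gamma(h) + c$. Now for any $h \in \mathcal{T}$ with $h(x) > -\infty$, translation-stability provides $\tilde h := h - h(x) \in \mathcal{T}$ with $\tilde h(x) = 0 \ge -t$ for every $t > 0$; hence
\[
\inf_{\{h' \in \mathcal{T}: h'(x) \ge -t\}}\overline{\Lambda}(h') \;\le\; \overline{\Lambda}(\tilde h) \;=\; \overline{\Lambda}(h) - h(x).
\]
Taking the infimum over $h \in \mathcal{T}$ on the right (the $h$ with $h(x) = -\infty$ contribute $+\infty$ to the outer infimum and may be discarded) and then the supremum over $t > 0$ delivers the reverse inequality. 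The same proof with $\Gamma$ handles the second equality, and the whole argument transfers verbatim to $\underline{\Lambda}$ since only shift-covariance was used.

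The hard part will be the shift-covariance of $\Gamma$, which I expect to be the main (still modest) obstacle: one must track how the truncation level $M$ transforms to $M - c$ under translation and confirm that this shift is harmless in the limit $M \to +\infty$.
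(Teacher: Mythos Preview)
Your proof is correct and follows essentially the same approach as the paper's. The paper argues the inequality contrapositively (take a real $\delta$ strictly above the left-hand side, produce near-minimizers $h_t$ showing the right-hand side is at most $\delta$), while you argue it directly, but the content is identical; for the equality under translation-stability the paper simply declares it ``clear'', whereas you spell out the shift $h\mapsto h-h(x)$ and verify the shift-covariance of $\Gamma$, which is exactly the intended reasoning.
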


\begin{proof}
 Let $\delta$ be a real such that
\[\sup_{t>0}\inf_{\{h\in\mathcal{T}:h(x)\ge
-t\}}\overline{\Lambda}(h)<\delta.\] For each $t>0$ there exists
$h_t\in\mathcal{T}$ such that $\overline{\Lambda}(h_t)<\delta$ and
$h_t(x)\ge-t$, hence $\overline{\Lambda}(h_t)-h_t(x)<\delta+t$ and
\[\inf_{h\in\mathcal{T}}\{\overline{\Lambda}(h)-h(x)\}\le
\inf_{t>0}\{\overline{\Lambda}(h_t)-h_t(x)\}\le\delta,\] which
proves the first inequality; the proof of the second one is similar.
The assertion about the equality is clear, as well as the last
assertion.
\end{proof}

\begin{theorem}\label{Stone-W-exp-tight}
Assume that $(\mu_\alpha)$ is exponentially tight with respect to
$(t_\alpha)$, and satisfies one of the conditions  of Proposition
\ref{rate-funct} (in particular under $X_0$-exponential tightness).
If $\Lambda(\cdot)$ exists on the bounded-above part
$\mathcal{A}_{ba}$ of a set $\mathcal{A}$ of real-valued continuous
functions on $X$, which is either an algebra separating the points
or a well-separating class, then $(\mu_\alpha)$ satisfies a large
deviation principle with powers $(t_\alpha)$ and rate function $J$
verifying $J_{\mid X\verb'\'\overline{X_0}}=+\infty$ and
\begin{equation}\label{Stone-W-exp-tight-eq1}
\forall x\in X_0,\ \ \ \ J(x)=
\sup_{h\in\mathcal{A}_{ba}}\{h(x)-\Lambda(h)\}=\sup_{h\in\mathcal{A}}\{h(x)-
\overline{\Lambda}(h)\}=\sup_{h\in\mathcal{A}}\{h(x)-
\underline{\Lambda}(h)\}\end{equation}
\[=\sup_{h\in\mathcal{A}}\{h(x)-\lim_{M\rightarrow+\infty}
\underline{\Lambda}(h_M)\},\] where  $h_M=h 1_{\{h<M\}}-\infty
1_{\{h\ge M\}}$ for all $h\in\mathcal{A}$ and all reals $M$. When
$X$ is regular (\ref{Stone-W-exp-tight-eq1}) determines uniquely the
rate function by
\[
\forall x\in X,\ \ \ \ J(x)=\sup_{G\in\mathcal{G},G\ni x}\inf_{y\in
G\cap X_0}J(y).
\]
When
$\mathcal{A}$ does not vanish identically at any point of $X$ (in
particular when $\mathcal{A}$ contains the constants as in the
well-separating case) it is sufficient to assume the existence of
$\Lambda(\cdot)$ on the negative part $\mathcal{A}_-$ of
$\mathcal{A}$ and  we can replace $\mathcal{A}_{ba}$ by
$\mathcal{A}_-$ in (\ref{Stone-W-exp-tight-eq1}).
\end{theorem}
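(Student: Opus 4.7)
The plan is to reduce the theorem to Proposition \ref{rate-funct} applied with the approximating class $\mathcal{T} = \mathcal{C}(X)_-$, and then translate the resulting variational formula for the rate function into one involving $\mathcal{A}$ via a Stone--Weierstrass approximation, using exponential tightness together with Proposition \ref{extension-Varadh} to control tails.

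First I would show that the existence of $\Lambda(\cdot)$ on $\mathcal{A}_{ba}$ propagates to the existence of $\Lambda(\cdot)$ on $\mathcal{C}(X)_-$. Given $g \in \mathcal{C}(X)_-$ and an exponentially tight compact $K$, I would uniformly approximate $g$ on $K$ by elements of $\mathcal{A}_{ba}$: for a point-separating algebra this is Stone--Weierstrass (after adjoining constants, which lie in $\mathcal{A}_{ba}$), while for a well-separating class the defining condition directly supplies sufficiently many approximants. Combining such an approximation with the splitting
\[
\mu_\alpha^{t_\alpha}(e^{g/t_\alpha}) \le \mu_\alpha^{t_\alpha}(e^{g/t_\alpha} 1_K) + \mu_\alpha^{t_\alpha}(X \setminus K),
\]
and using exponential tightness to render $\limsup \mu_\alpha^{t_\alpha}(X \setminus K)$ negligible, one concludes that $\overline\Lambda(g) = \underline\Lambda(g)$.

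Second, with $\Lambda(\cdot)$ known on $\mathcal{C}(X)_-$, Proposition \ref{rate-funct}(b) yields a large deviation principle with rate function $J = l_{\mathcal{C}(X)_-} = l_{\mathcal{C}(X)}$, which is $+\infty$ on $X \setminus \overline{X_0}$ and, for $x \in X_0$, equals
\[
\sup_{t>0}\inf_{\{h \in \mathcal{C}(X)_-:h(x)\ge -t\}}\{-\Lambda(h)\},
\]
together with the analogous expressions involving $\overline\Lambda$, $\underline\Lambda$, and $\lim_M \underline\Lambda(h_M)$ on $\mathcal{C}(X)$. Since $\mathcal{C}(X)$ is translation-stable, Lemma \ref{lemma-continuity-compact-open} converts these into suprema of the form $\sup_{h \in \mathcal{C}(X)}\{h(x) - \overline\Lambda(h)\}$, and similarly for the other functionals. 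I would then apply the Stone--Weierstrass approximation in the reverse direction to replace $\mathcal{C}(X)$ by $\mathcal{A}$ (and $\mathcal{C}(X)_-$ by $\mathcal{A}_{ba}$) in these suprema: the inequality $\ge$ is immediate from $\mathcal{A} \subset \mathcal{C}(X)$, while $\le$ follows because exponential tightness makes the error in $\Lambda$ induced by a uniform approximation on large compact sets arbitrarily small. This establishes (\ref{Stone-W-exp-tight-eq1}); the uniqueness statement in the regular case then comes from the standard uniqueness of rate functions on regular spaces together with the last identity of Proposition \ref{rate-funct}(b).

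The main obstacle is the $\mathcal{A}_-$ variant under the non-vanishing hypothesis, because $\mathcal{A}_-$ is not translation-stable, so Lemma \ref{lemma-continuity-compact-open} yields only one inequality. The remedy is to use the non-vanishing condition: for each $x \in X_0$ choose $f \in \mathcal{A}$ with $f(x) \ne 0$; then, given any $h \in \mathcal{A}_{ba}$ and any $\varepsilon > 0$, one can form modifications of the shape $h - c f^{2n}$ with $c>0$ and $n$ large enough. The algebra assumption (or the analogous product/lattice closure in the well-separating case) guarantees $f^{2n} \in \mathcal{A}$, so these modifications lie in $\mathcal{A}_-$; they can be tuned so that $h(x)$ changes by at most $\varepsilon$ while the whole function is pushed below zero, and $\Lambda$ is perturbed controllably thanks to Proposition \ref{extension-Varadh}. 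A limit as $\varepsilon \to 0$ transfers the supremum from $\mathcal{A}_{ba}$ to $\mathcal{A}_-$ and yields the existence of $\Lambda(\cdot)$ on $\mathcal{A}_-$ needed to run the first two steps starting only from this weaker hypothesis.
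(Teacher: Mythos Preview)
Your overall plan for the first three steps matches the paper's proof closely: one first shows that $\Lambda(\cdot)$ exists on $\mathcal{C}(X)_-$ by uniform-on-compacts approximation and exponential tightness, then applies Proposition~\ref{rate-funct} with $\mathcal{T}=\mathcal{C}(X)$ together with Lemma~\ref{lemma-continuity-compact-open}, and finally squeezes the resulting chain of inequalities back down to $\mathcal{A}$. One detail you gloss over in the well-separating case is that the approximants are not in $\mathcal{A}$ itself but in the set $\mathcal{A}_-^{\vee}$ of finite \emph{maxima} of elements of $\mathcal{A}_-$ (this is Lemma~4.4.9 of \cite{dem}); the passage from $\mathcal{A}_-^{\vee}$ back to $\mathcal{A}_-$ then uses the elementary identity $\Lambda(\bigvee_j h_j)=\max_j\Lambda(h_j)$.

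The genuine gap is in your treatment of the $\mathcal{A}_-$ variant. Your proposed modification $h-cf^{2n}$ with $f(x)\neq 0$ need not lie in $\mathcal{A}_-$: the non-vanishing hypothesis only gives, for each point, \emph{some} $f$ not vanishing there, so your chosen $f$ may well vanish at other points $y$ where $h(y)>0$, and then $h(y)-cf(y)^{2n}=h(y)>0$ regardless of $c$ and $n$. (For a well-separating class the construction breaks down even earlier, since such a class is closed under minima, not products, so $f^{2n}$ is not available.)

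The paper's device avoids this entirely. In the algebra case one writes $h_s=h\vee(-s)$ and sets $g=\sqrt{-h_s}\in\mathcal{C}(X)$. When $\mathcal{A}$ separates points \emph{and} does not vanish identically at any point, the Stone--Weierstrass theorem applies to $\mathcal{A}$ itself (no constants need be adjoined), giving $g_i\in\mathcal{A}$ with $g_i\to g$ uniformly on compacts. Then $h_i:=-g_i^2$ is automatically in $\mathcal{A}_-$ and converges to $h_s$ uniformly on compacts. This single trick simultaneously (i) shows that existence of $\Lambda(\cdot)$ on $\mathcal{A}_-$ already forces existence on $\mathcal{C}(X)_-$, so your Steps~1--2 run from the weaker hypothesis, and (ii) yields $\inf_{h\in\mathcal{C}(X)_-}\{\Lambda(h)-h(x)\}=\inf_{h\in\mathcal{A}_-}\{\Lambda(h)-h(x)\}$ directly, so the rate-function formula with $\mathcal{A}_-$ drops out of the same chain of equalities without any separate perturbation argument. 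In the well-separating case the analogue is obtained by approximating through $\mathcal{A}_-^{\vee}$ and then collapsing to $\mathcal{A}_-$ via the max identity above.
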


\begin{proof}
Let $h\in\mathcal{C}(X)_-$ such that $\overline{\Lambda}(h)>-s$ for
some $s>0$,
 and put $h_s=h\vee-s$.
First assume that $\mathcal{A}$ is an algebra separating the points
  and put $g=\sqrt{-h_s}$.
 Let $\mathcal{B}$ be the
algebra generated by  $\mathcal{A}\cup\{c\}$ where $c$ is any
nonzero constant function, and note that any element
$g\in\mathcal{B}$ has the form $g=k+t$ for some $k\in\mathcal{A}$
and some constant $t$ (i.e., $\mathcal{B}=\mathcal{A}+\mathbb{R}$).
By the Stone-Weierstrass theorem, there is a net $(g_i)_{i\in I}$ in
$\mathcal{B}$ converging uniformly on compact sets to $g$. Put
$h_i=-g_i^2$, $k_i=h_i\vee -2s$ for all $i\in I$, and note that
$(h_i)$ and $(k_i)$ converge uniformly on compact sets to $h_s$. Let
$K\in\mathcal{K}$   such that
$\limsup\mu_{\alpha}^{t_{\alpha}}(X\verb'\'K)<e^{-3s}$. Assume that
$\Lambda(\cdot)$ exists on $\mathcal{A}_{ba}$, and note that this
gives the existence of $\Lambda(\cdot)$ on $\mathcal{B}_-$. Since
 for each
$i\in I$, and each subnet $(\mu_{\beta}^{t_{\beta}})$ of
$(\mu_{\alpha}^{t_{\alpha}})$ we have
\[e^{-2s}\le e^{\Lambda(k_i)}=\lim
\mu_{\beta}^{t_{\beta}}(e^{k_i/t_{\beta}})=\limsup
\mu_{\beta}^{t_{\beta}}(e^{k_i/t_{\beta}}1_K)\vee\limsup
\mu_{\beta}^{t_{\beta}}(e^{k_i/t_{\beta}}1_{X\verb'\'K}),\]and
\[e^{-s}\le\limsup
\mu_{\alpha}^{t_{\alpha}}(e^{h_s/t_{\alpha}})=\limsup
\mu_{\alpha}^{t_{\alpha}}(e^{h_s/t_{\alpha}}1_K)\vee\limsup
\mu_{\alpha}^{t_{\alpha}}(e^{h_s/t_{\alpha}}1_{X\verb'\'K}),\] it
follows that
$\lim\mu_{\alpha}^{t_{\alpha}}(e^{k_i/t_{\alpha}}{1_K})$ exists with
\begin{equation}\label{Stone-W-exp-tight-eq2}
e^{\Lambda(k_i)}=\lim\mu_{\alpha}^{t_{\alpha}}(e^{k_i/t_{\alpha}}{1_K})
\ \ \ \ \ \ \textnormal{for all $i\in I$},
\end{equation}
 and
\begin{equation}\label{Stone-W-exp-tight-eq2.1}
\limsup\mu_{\alpha}^{t_{\alpha}}(e^{h_s/t_{\alpha}})=
\limsup\mu_{\alpha}^{t_{\alpha}}(e^{h_s/t_{\alpha}}{1_K}).
\end{equation}
 The
inequalities
\[\log\mu_{\alpha}^{t_{\alpha}}(e^{k_i/t_{\alpha}}{1_K})-\sup_{x\in
K}|k_i(x)-h(x)|\le
\log\mu_{\alpha}^{t_{\alpha}}(e^{h_s/t_{\alpha}}{1_K})\le\]
\[\log\mu_{\alpha}^{t_{\alpha}}(e^{k_i/t_{\alpha}}{1_K})+\sup_{x\in
K}|k_i(x)-h_s(x)|\]combined with (\ref{Stone-W-exp-tight-eq2}) and
(\ref{Stone-W-exp-tight-eq2.1}) yield
\[\Lambda(k_i)-\sup_{x\in
K}|k_i(x)-h_s(x)|\le\log\liminf\mu_{\alpha}^{t_{\alpha}}(e^{h_s/t_{\alpha}}{1_K})\le
\log\liminf\mu_{\alpha}^{t_{\alpha}}(e^{h_s/t_{\alpha}})\le\]
\[\log\limsup\mu_{\alpha}^{t_{\alpha}}(e^{h_s/t_{\alpha}})
=\log\limsup\mu_{\alpha}^{t_{\alpha}}(e^{h_s/t_{\alpha}}{1_K})\le
\Lambda(k_i)+\sup_{x\in K}|k_i(x)-h_s(x)|,\] and by taking the limit
along  $i$,
 it follows that $\Lambda(h)$ exists with
\begin{equation}\label{Stone-W-exp-tight-eq3}
\Lambda(h)=\Lambda(h_s)=\lim\Lambda(k_i)=\lim\Lambda(h_i).
\end{equation}
 Since $h$ is arbitrary in
  ${\mathcal{C}(X)}_-$, $\Lambda(\cdot)$ exists on
${\mathcal{C}(X)}_-$ and
\begin{equation}\label{Stone-W-exp-tight-eq3.2}
\inf_{h\in{\mathcal{C}(X)}_-}\{\Lambda(h)-h(x)\}=
\inf_{h\in\mathcal{B}_-}\{\Lambda(h)-h(x)\}\ \ \ \ \ \textnormal{for
all $x\in X$}.
\end{equation}
 By
Proposition
 \ref{rate-funct}  and Lemma
\ref{lemma-continuity-compact-open}  with
$\mathcal{T}=\mathcal{C}(X)$ it follows that $(\mu_\alpha)$
satisfies a large deviation principle with powers $(t_\alpha)$ and
rate function $l_{\mathcal{C}(X)}$ taking infinite value outside
$\overline{X_0}$ (see Remark \ref{bounds-with-no-rate-function}) and
satisfying
\begin{equation}\label{Stone-W-exp-tight-eq4}
\forall x\in X_0,\ \ \ \ \ -l_{\mathcal{C}(X)}(x)=
\inf_{h\in\mathcal{C}(X)}
\{\lim_{M\rightarrow+\infty}\underline{\Lambda}(h_M)-h(x)\}=
\inf_{h\in\mathcal{C}(X)} \{\underline{\Lambda}(h)-h(x)\}\]
\[=\inf_{h\in{\mathcal{C}(X)}_-} \{{\Lambda}(h)-h(x)\}
=\inf_{h\in\mathcal{B}_-} \{{\Lambda}(h)-h(x)\}
\ge\inf_{h\in\mathcal{B}_{ba}}\{{\Lambda}(h)-h(x)\}=
\inf_{h\in\mathcal{A}_{ba}}\{{\Lambda}(h)-h(x)\}\]
\[\ge\inf_{h\in\mathcal{A}}\{\underline{\Lambda}(h)-h(x)\}
=\inf_{h\in\mathcal{B}}\{\underline{\Lambda}(h)-h(x)\}\ge\inf_{h\in\mathcal{C}(X)}
\{\underline{\Lambda}(h)-h(x)\},
\end{equation}
 where the fourth
equality follows from (\ref{Stone-W-exp-tight-eq3.2}), and
  the last two
  equalities follow
 by noting that $\overline{\Lambda}(h)=\overline{\Lambda}(k)+t$ when
$h=k+t$  with $k\in\mathcal{A}$ and $t\in\mathbb{R}$. Consequently,
  the above inequalities are equalities, (\ref{Stone-W-exp-tight-eq1})
holds and the first assertion of the algebraic case  is proved. The
assertion concerning the regular case follows from Proposition
\ref{rate-funct}.
 The last assertion follows by noting that when
$\mathcal{A}$ does not vanish identically at  any point of $X$ (in
particular when $\mathcal{A}$ contains the constants) we can use
$\mathcal{A}$ in place of
 $\mathcal{B}$,  and
it is sufficient to  assume the existence  of $\Lambda(\cdot)$ on
$\mathcal{A}_-$ .

Assume now that $\mathcal{A}$ is a well-separating class. Let
$\mathcal{A}_-^{\vee}$   be the set constituted by the finite maxima
of elements of $\mathcal{A}_-$.
 By Lemma 4.4.9 of \cite{dem} (which remains true for
 any
 topological space), for
each $K\in\mathcal{K}$ and each $\varepsilon>0$ there exists
$h_{K,\varepsilon}\in\mathcal{A}_-^{\vee}$
 such that
 $\sup_{x\in
K}|h_{K,\varepsilon}(x)-h(x)|<\varepsilon$.
 The nets $(h_i)_{i\in I}$ and $(k_i)_{i\in I}$,
where $k_i=h_i\vee -2s$ and
 $I=\{(K,\varepsilon):K\in\mathcal{K},\varepsilon>0\}$ (as a product directed
set), converge uniformly on compact sets to $h$. A similar proof as
above with $\mathcal{A}_-^{\vee}$ in place of $\mathcal{B}_-$ gives
 the existence of
$\Lambda(\cdot)$ on  ${\mathcal{C}(X)}_-$, and  the large deviation
principle with rate function $l_{\mathcal{C}(X)}$ satisfying
\begin{equation}\label{Stone-W-exp-tight-eq5}
\forall x\in X_0,\ \ \ \ \ -l_{\mathcal{C}(X)}(x)=
\inf_{h\in\mathcal{C}(X)}
\{\lim_{M\rightarrow+\infty}\underline{\Lambda}(h_M)-h(x)\}=
\inf_{h\in\mathcal{C}(X)}\{\underline{\Lambda}(h)-h(x)\}\]
\[
=\inf_{h\in{\mathcal{C}(X)}_-}\{\underline{\Lambda}(h)-h(x)\}=
\inf_{h\in\mathcal{A}_-^{\vee}}\{{\Lambda}(h)-h(x)\}
=\inf_{h\in\mathcal{A}_-}\{\Lambda(h)-h(x)\}\]
\[ \ge \inf_{h\in\mathcal{C}(X)}\{\underline{\Lambda}(h)-h(x)\},
\end{equation}
where the last equality follows by noting that
$\Lambda(h)=\max_{1\le j\le r}\Lambda(h_j)$ when $h=\bigvee_{j=1}^r
h_j$ with $\{h_j:1\le j\le r\}\subset\mathcal{A}$. Therefore, the
above inequality is an equality, which proves the well-separating
case.
\end{proof}

The following corollary gives the general variational form of any
rate function on  a completely regular space in terms of
$\mathcal{C}(X)$; this result seems new in absence of exponential
tightness. When $X$ is Polish or locally compact Hausdorff, it gives
the general form of any tight rate function in terms of any
separating algebra or well-separating class,  since in this case the
exponential tightness holds. In the locally compact Hausdorff case,
we may compare it with a similar result obtained in the next section
when the exponential tightness fails (Corollary
\ref{loc-compact-rate-funct}).

\begin{corollary}\label{rate-funct-Polish}
Let $X$ be completely regular. If  $(\mu_\alpha)$ satisfies a large
deviation principle with powers $(t_\alpha)$, then the  rate
function $J$ is given by
\[
J(x)=
\sup_{h\in\mathcal{C}(X)_-}\{h(x)-\Lambda(h)\}=\sup_{h\in\mathcal{C}(X)}\{h(x)-
\overline{\Lambda}(h)\}=\sup_{h\in\mathcal{C}(X)}\{h(x)-
\underline{\Lambda}(h)\}\]
\[=\sup_{h\in\mathcal{C}(X)}\{h(x)-\lim_{M\rightarrow+\infty}
\underline{\Lambda}(h_M)\}\ \ \ \ \ \ \textit{for all $x\in X$}.
\]
If moreover  $(\mu_\alpha)$ is exponentially tight with respect to
$(t_\alpha)$, then we can replace $\mathcal{C}(X)$
 by $\mathcal{A}$  in the last two above
equalities, and $\mathcal{C}(X)_-$ by $\mathcal{A}_{ba}$ (resp.
$\mathcal{A}_-$ when $\mathcal{A}$ does not vanish identically at
any point of $X$) in the first equality (with $\mathcal{A}$ any set
of real-valued continuous functions on $X$, which is either an
algebra separating the points or a well-separating class).
\end{corollary}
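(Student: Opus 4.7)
The plan is to exploit the fact that $X$ completely regular forces $X_0=X$, so that the dichotomy in (\ref{rate-funct-eq0}) and (\ref{Stone-W-exp-tight-eq1}) collapses and the rate function $J$ (which is unique and lower semi-continuous on the regular space $X$) is fully determined by its values on $X_0$. The key input throughout is Proposition \ref{extension-Varadh}. Applied to $h\in\mathcal{C}(X)_-$, since $h\le 0$ one has $\{h\le M\}=\{h<M\}=X$ for any $M>0$, so $\Lambda(h)$ exists with $\Lambda(h)=\log\sup_{y\in X}e^{h(y)-J(y)}$. Applied to an arbitrary $h\in\mathcal{C}(X)$ via its truncations $h_M$ and letting $M\to+\infty$ it yields $\lim_M\underline{\Lambda}(h_M)=\log\sup_y e^{h(y)-J(y)}$; together with the trivial $\underline{\Lambda}(h)\ge\lim_M\underline{\Lambda}(h_M)$ (from $h_M\le h$) and $\overline{\Lambda}(h)\ge\underline{\Lambda}(h)$, plugging $y=x$ into the inner supremum shows that each of the four suprema in the statement is bounded above by $J(x)$.

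For the matching lower bound I would build explicit near-optimizers in $\mathcal{C}(X)_-$ using complete regularity. Given $x\in X$ with $J(x)>0$ and $0<\eta<J(x)$, lower semi-continuity of $J$ produces an open $G\ni x$ with $J>J(x)-\eta$ on $G$; complete regularity produces $g\in\mathcal{C}_b(X,[0,1])$ with $g(x)=0$ and $g\equiv 1$ on $X\verb'\'G$; set $h:=-sg\in\mathcal{C}_b(X)_-\subset\mathcal{C}(X)_-$ with $s=J(x)-\eta$. Splitting the supremum defining $\Lambda(h)$ into $y\in G$ (where $h(y)-J(y)\le-(J(x)-\eta)$) and $y\in X\verb'\'G$ (where $h(y)-J(y)\le-s=-(J(x)-\eta)$) gives $\Lambda(h)\le-(J(x)-\eta)$, hence $h(x)-\Lambda(h)=-\Lambda(h)\ge J(x)-\eta$. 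Letting $\eta\downarrow 0$ (or taking $s\uparrow+\infty$ if $J(x)=+\infty$) and using $\mathcal{C}(X)_-\subset\mathcal{C}(X)$ sandwiches all four suprema with $J(x)$, finishing the first part.

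For the improved statement under exponential tightness I would simply invoke Theorem \ref{Stone-W-exp-tight}. Since $X_0=X$, exponential tightness coincides with $X_0$-exponential tightness, yielding condition $(i)$ of Proposition \ref{rate-funct}. Existence of $\Lambda(\cdot)$ on $\mathcal{A}_{ba}$ (respectively on $\mathcal{A}_-$ when $\mathcal{A}$ does not vanish identically at any point) follows exactly as in the first paragraph by applying Proposition \ref{extension-Varadh} with $M\ge\sup h$ (respectively $M>0$). The hypotheses of Theorem \ref{Stone-W-exp-tight} are therefore satisfied, and its formula (\ref{Stone-W-exp-tight-eq1}) on $X_0=X$ delivers precisely the replacement of $\mathcal{C}(X)$ by $\mathcal{A}$ and of $\mathcal{C}(X)_-$ by $\mathcal{A}_{ba}$ (resp.\ $\mathcal{A}_-$).

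The substantive step, and the only one where the topological hypothesis is genuinely used, is the Urysohn-type construction in the second paragraph; without complete regularity $\mathcal{C}(X)$ may be too poor (e.g.\ only constants) for the variational formula to reproduce $J$. The remainder of the argument is careful bookkeeping on Proposition \ref{extension-Varadh} and an appeal to Theorem \ref{Stone-W-exp-tight}.
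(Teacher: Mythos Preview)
Your argument is correct, and for the first assertion it is more self-contained than the paper's. The paper proceeds by first invoking an external result (Theorem~3 of \cite{com-JTP-05}) to deduce that the large deviation principle forces condition~$(iii)$ of Proposition~\ref{rate-funct}, then applies Proposition~\ref{rate-funct}(b) with $\mathcal{T}=\mathcal{C}(X)$ to obtain the rate function in the form~(\ref{rate-funct-eq1}), and finally uses Lemma~\ref{lemma-continuity-compact-open} (together with the translation-invariance of $\mathcal{C}(X)$ and $\mathcal{C}(X)_-$) to rewrite~(\ref{rate-funct-eq1}) as the variational formulas of the corollary. You bypass this machinery entirely: the upper bound for all four suprema comes straight from Proposition~\ref{extension-Varadh}, and the matching lower bound is obtained by an explicit Urysohn-type construction, which is exactly the content of complete regularity (and, morally, what the approximating-class framework encodes). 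Your route is thus more elementary and avoids the external citation; the paper's route has the advantage of illustrating how the corollary drops out of the general approximating-class formalism developed in Section~\ref{section-rate-function}.

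For the second assertion both proofs agree: check that $\Lambda(\cdot)$ exists on $\mathcal{A}_{ba}$ (resp.\ $\mathcal{A}_-$) and invoke Theorem~\ref{Stone-W-exp-tight}, noting $X_0=X$. The paper cites the generalized Varadhan theorem of \cite{com-TAMS-03} for the existence of $\Lambda(\cdot)$, while you obtain it from Proposition~\ref{extension-Varadh}; these are equivalent here since bounded-above functions trivially satisfy the tail condition.
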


\begin{proof}
Since $X$ is completely regular, by Theorem 3 of \cite{com-JTP-05} a
large deviation principle implies condition $(iii)$ of Proposition
\ref{rate-funct}.   The first assertion  is then  a direct
consequence of Proposition \ref{rate-funct} and Lemma
\ref{lemma-continuity-compact-open} with
$\mathcal{T}=\mathcal{C}(X)$. The existence of $\Lambda(\cdot)$ on
${\mathcal{C}(X)}_{ba}$ follows from the generalized version of
Varadhan's theorem for bounded above continuous functions
(\cite{com-TAMS-03}, Theorem 3.3);  the last assertion follows then
from Theorem \ref{Stone-W-exp-tight}.
\end{proof}

The following Prohorov-type result generalizes to any topological
space the preceding versions for normal Hausdorff and completely
regular spaces   given in \cite{com-TAMS-03} and \cite{com-JTP-05},
respectively. The notations $\Lambda^{(\mu_\beta^{t_\beta})}(h)$
means that the limit is taken along the subnet
$(\mu_\beta^{t_\beta})$ (i.e.
$\Lambda^{(\mu_\beta^{t_\beta})}(h)=\log\lim\mu_{\beta}^{t_{\beta}}(e^{h/t_{\beta}})$).

\begin{corollary}\label{general-Prohorov-LDP}
If
 $(\mu_\alpha)$ is $X_0$-exponentially tight with respect to
$(t_\alpha)$ and $\mathcal{C}(X)$ separates the points, then
 $(\mu_\alpha)$ has a subnet $(\mu_\beta)$ satisfying the
 conclusions of Theorem \ref{Stone-W-exp-tight} with any set
  $\mathcal{A}$ of real-valued continuous functions on $X$, which
is either an algebra separating the points or a well-separating
class.
\end{corollary}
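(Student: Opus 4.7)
My plan is to apply a Tychonoff-style compactness argument to find a subnet along which $\Lambda^{(\mu_\beta^{t_\beta})}(\cdot)$ exists on the entire space $\mathcal{C}(X)_{ba}$ of bounded-above continuous functions, and then invoke Theorem \ref{Stone-W-exp-tight} to conclude.

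First I will observe that for each $h\in\mathcal{C}(X)_{ba}$ with $c_h:=\sup_X h<+\infty$, we have $\int_X e^{h/t_\alpha}\,d\mu_\alpha\le e^{c_h/t_\alpha}$ since $\mu_\alpha$ is a probability measure, and therefore $\mu_\alpha^{t_\alpha}(e^{h/t_\alpha})\in[0,e^{c_h}]$ for every $\alpha$. Consequently the assignment $\alpha\mapsto\bigl(\mu_\alpha^{t_\alpha}(e^{h/t_\alpha})\bigr)_{h\in\mathcal{C}(X)_{ba}}$ is a net in the product space $Z=\prod_{h\in\mathcal{C}(X)_{ba}}[0,e^{c_h}]$, which is compact by Tychonoff's theorem. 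Extracting a convergent subnet $(\mu_\beta)$, I obtain the existence of $\Lambda^{(\mu_\beta^{t_\beta})}(h)$ in $[-\infty,c_h]$ for every $h\in\mathcal{C}(X)_{ba}$.

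Next I will observe that $X_0$-exponential tightness is inherited by subnets: if $K\subset X_0$ is compact with $\limsup_\alpha\mu_\alpha^{t_\alpha}(X\setminus K)<\varepsilon$, then $\limsup_\beta\mu_\beta^{t_\beta}(X\setminus K)<\varepsilon$ as well. Hence $(\mu_\beta)$ still satisfies condition (i) of Proposition \ref{rate-funct}. Finally, given any set $\mathcal{A}$ as in the statement, $\mathcal{A}_{ba}\subset\mathcal{C}(X)_{ba}$, so $\Lambda^{(\mu_\beta^{t_\beta})}(\cdot)$ exists on $\mathcal{A}_{ba}$, and Theorem \ref{Stone-W-exp-tight} applied to $(\mu_\beta)$ yields the large deviation principle together with the variational expression (\ref{Stone-W-exp-tight-eq1}) for the rate function.

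The main subtlety, and the reason this really works for a general set $\mathcal{A}$ rather than only for $\mathcal{A}\subset\mathcal{C}_b(X)$, is the uniform estimate $\mu_\alpha^{t_\alpha}(e^{h/t_\alpha})\le e^{c_h}$ valid for every $h$ that is only bounded \emph{above}: the $t_\alpha$-th power compensates exactly for the factor $e^{c_h/t_\alpha}$, keeping the relevant quantities in a fixed compact interval and so allowing Tychonoff to be invoked over all of $\mathcal{C}(X)_{ba}$. Beyond this observation there is no real obstacle; the corollary reduces to Theorem \ref{Stone-W-exp-tight} combined with the routine subnet extraction above.
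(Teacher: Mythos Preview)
Your argument is correct and follows essentially the same route as the paper: a Tychonoff compactness extraction followed by an application of Theorem~\ref{Stone-W-exp-tight}. The only (harmless) difference is that you index the product over all of $\mathcal{C}(X)_{ba}$ rather than over a fixed $\mathcal{A}$, which has the pleasant side effect that a single extracted subnet works simultaneously for every admissible $\mathcal{A}$, matching the wording ``with any set $\mathcal{A}$'' in the statement.
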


\begin{proof}
Let $\mathcal{A}\subset\mathcal{C}(X)$ as above and
 put $\Lambda_\alpha(h)=\log\mu_\alpha^{t_\alpha}(e^h)$ for all
$h\in \mathcal{A}$, so that $(\Lambda_\alpha)$ is a net in the
compact set $[-\infty,+\infty]^{\mathcal{A}}$ (provided with the
product topology). Therefore,    $(\Lambda_\alpha)$ has a converging
subnet $(\Lambda_\beta)$, i.e.,
$\lim\Lambda_\beta(h)=\Lambda^{(\mu_\beta^{t_\beta})}(h)$ exists for
all $h\in\mathcal{A}$. The conclusion follows from Theorem
\ref{Stone-W-exp-tight}  applied to  $(\nu_\beta^{t_\beta})$.
\end{proof}

Corollary \ref{appli-tvs} generalizes Theorem 7.1 of
\cite{bry-Ann-Prob-92} where $X$ is required to be metrizable, and
also strengthens it by only requiring  the existence of
$\Lambda(\cdot)$  on $\mathcal{A}_-$ and giving the rate function in
terms of $\mathcal{A}_-$  (by continuous affine functionals, we mean
those of the form $u+c$, where $u$ belongs to the topological dual
and $c$ is a real) .

\begin{corollary}\label{appli-tvs}
Let $X$ be a  locally convex real topological vector space and
assume that $(\mu_\alpha)$ is exponentially tight with respect to
$(t_\alpha)$. If $\Lambda(\cdot)$ exists on the negative part of the
set $\mathcal{A}$ of all finite minima of continuous affine
functionals, then $(\mu_\alpha)$ satisfies a large deviation
principle with powers $(t_\alpha)$ and rate function
\[J(x)=\sup_{h\in\mathcal{A}_{-}}\{h(x)-\Lambda(h)\}=\sup_{h\in\mathcal{A}}\{h(x)-
\overline{\Lambda}(h)\}=\sup_{h\in\mathcal{A}}\{h(x)-
\underline{\Lambda}(h)\}=\]
\[\sup_{h\in\mathcal{A}}\{h(x)-\lim_{M\rightarrow+\infty}
\underline{\Lambda}(h_M)\}\ \ \ \ \ \ \textit{for all $x\in X$}.\]
\end{corollary}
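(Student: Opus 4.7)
The plan is to invoke the well-separating-class branch of Theorem \ref{Stone-W-exp-tight} with the class $\mathcal{A}$ from the statement, so that the main work reduces to checking that $\mathcal{A}$ is a well-separating class and then assembling the hypotheses.

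First I would verify that $\mathcal{A}$ is a well-separating class on $X$. The class is closed under finite minima by definition; it contains all constants (take the zero linear functional plus a constant), and in particular does not vanish identically at any point of $X$. The crucial item is the separation property, and this is where local convexity enters via the Hahn-Banach theorem: given $x_0 \in X$ and a convex open neighborhood $V$ of $x_0$, its Minkowski gauge $p$ is a continuous seminorm bounding $V$, and Hahn-Banach furnishes continuous linear functionals dominated by $p$ whose suitable finite minimum (after affine shift anchored at $x_0$) yields the required separating element of $\mathcal{A}$. This is essentially the classical argument behind Theorem 4.5.3 of \cite{dem}, transplanted to the non-metrizable locally convex setting.

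Second, since a locally convex topological vector space is completely regular, Proposition 1 of \cite{com-JTP-05} (recalled after Definition \ref{defi-approx-class}) gives $X_0 = X$. The exponential tightness assumed in the corollary is therefore automatically $X_0$-exponential tightness, so the hypotheses of Theorem \ref{Stone-W-exp-tight} are met with the well-separating class $\mathcal{A}$. Because $\mathcal{A}$ contains the constants, the final clause of that theorem applies: the existence of $\Lambda(\cdot)$ on $\mathcal{A}_-$ alone suffices, and in (\ref{Stone-W-exp-tight-eq1}) one may replace $\mathcal{A}_{ba}$ by $\mathcal{A}_-$. Since $X_0 = X$, the four variational expressions for $J$ listed in the corollary are exactly those of (\ref{Stone-W-exp-tight-eq1}) read on all of $X$.

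The main obstacle is the well-separating verification; once that is in place, everything else is bookkeeping. The delicate point there is to upgrade the pointwise Hahn-Banach separation into a uniform one produced by a single finite minimum of continuous affine functionals, which is precisely where local convexity is used in an essential way, and where the strengthening over the metrizable version of \cite{bry-Ann-Prob-92} becomes visible.
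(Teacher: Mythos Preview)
Your approach is correct and matches the paper's: verify that $\mathcal{A}$ is a well-separating class and invoke Theorem~\ref{Stone-W-exp-tight} (noting, as you do, that local convexity makes $X$ completely regular so $X_0=X$ and ordinary exponential tightness suffices). The one place you over-complicate matters is the separation check: a well-separating class in the sense of \cite{dem} needs only to contain constants, be closed under finite minima, and separate \emph{pairs of points}; since Hahn--Banach gives a continuous linear functional $u$ with $u(x)\neq u(y)$ for any $x\neq y$, a single affine rescaling already hits any prescribed values at $x$ and $y$, so no Minkowski gauges or finite-minimum constructions are needed here---the passage from point separation to uniform approximation on compacts (Lemma~4.4.9 of \cite{dem}) is absorbed into the proof of Theorem~\ref{Stone-W-exp-tight}, not into this corollary.
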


\begin{proof}
 Since the topological dual of a locally
convex real topological vector space separates the points,
$\mathcal{A}$ is a well-separating class and the result follows from
Theorem \ref{Stone-W-exp-tight}.
\end{proof}

\begin{example}
We briefly describe here  a regular Hausdorff space $X$ such that
$X\verb'\'X_0$ is a singleton; in other words, $X$ fails to be
completely regular because of one point (it is taken
 from Example 8 of \cite{ben}, and we refer to this paper for more
 details). In particular,  the usual version of Bryc's
 theorem does not work, but   $\mathcal{C}(X)$ separates the points so that
 Theorem \ref{Stone-W-exp-tight} and
  Corollary \ref{general-Prohorov-LDP} apply. Note that this is not a trivial property
  when complete regularity fails,
   since there are regular spaces (with more than one point)
      where $\mathcal{C}(X)$ is reduced to
 constants (\cite{dou}).

Let $H$ (resp. $K$) be the set of all irrational numbers belonging
(resp. not belonging) to the standard Cantor set, and let
$\psi:K\rightarrow H$ be a homeomorphism. Let $D=]0,1[^2\ \verb'\'\
\{(x,x):x\ \textnormal{rational}\}$ and define the following
topology on $D$. Each $(x,y)$ is isolated when
 $x\neq y$; a basic neighborhood of $(x,x)\in H^2$ (resp. $(x,x)\in K^2$) is of the form
 $[(x,x),(x+\varepsilon,x)[$ (resp. $[(x,x),(x,x-\varepsilon)[$) for some $\varepsilon>0$.
 For each
 integer $n$ let $D_n$ be a copy of $D$, and let $Y$ be the quotient
 space of the topological sum of the $D_n's$, where the equivalence
 relation is given by identifying each $(x,x)\in K_n^2$ with $(\psi(x),\psi(x))\in
 H_{n+1}^2$. Put $X=Y\cup\{p\}$ where $p$ is any extra point, and define
the   basic open neighborhoods of $p$ the sets of  the form
 \[
 G_{n,p}=\{p\}\cup (D_n\verb'\'\ \textnormal{diagonal of $D_n$})\cup\bigcup_{m>n} D_m
 \]
 for some integer $n$.
 It is easy to verify that $X$ is  regular Hausdorff  with $X\verb'\'\{p\}\subset
 X_0$; in fact, every point of $X\verb'\'\{p\}$  has a clopen neighborhood basis. However,
  $p$ cannot be separated from  $X\verb'\'G_{n,p}$
  (for
any $G_{n,p}$) by
 a continuous function, i.e., $X\verb'\'{X_0}=\{p\}$ and $X$ is not
 completely regular.
 \end{example}


\begin{remark}\label{rem-algebra}
In the algebraic case, when $\mathcal{A}\subset\mathcal{C}_b(X)$ and
$\Lambda(\cdot)$ exists on $\mathcal{A}$, then $\Lambda(\cdot)$
exists on the unital algebra
 $\mathcal{B}=\mathcal{A}+\mathbb{R}$, and  on its uniform
 closure $\mathcal{B}'$, which is stable by finite minima, as it is well known
 (\cite{ped}, Lemma 4.3.3).
 Since $\mathcal{B}'$
 is then a well-separating class, in the   completely regular Hausdorff case,
 it follows from Bryc's theorem that under exponential
 tightness, large deviations hold with rate function
 \begin{equation}\label{rem-algebra-eq1}
 \forall x\in X,\ \ \ \ \
 J(x)=\sup_{h\in\mathcal{C}_b(X)}\{h(x)-\Lambda(h)\}.
 \end{equation}
 If moreover $X$ is metric,
  then the $\sup$ in (\ref{rem-algebra-eq1}) can be taken on $\mathcal{B}'$ by \cite{eic},
  and then on $\mathcal{B}$  by the continuity of $\Lambda(\cdot)$ with respect to the uniform
 metric,  hence finally on $\mathcal{A}$. This shows that when $X$
 is metric and
  $\mathcal{A}\subset\mathcal{C}_b(X)$ the first conclusion  of
  Theorem \ref{Stone-W-exp-tight}  follows
  easily from known results; however,  the expression of the rate
  function in terms of $\mathcal{A}_-$ seems new, except
   when
  $\mathcal{A}=\mathcal{C}_b(X)$ and $X$ Polish (see the proof of
  Bryc's theorem given in \cite{dup}). Also in the Polish case,
  the fact that
   the existence of $\Lambda(\cdot)$ on
${\mathcal{C}_b(X)}_-$ implies large deviations
 was already known (see the proof of Corollary 1.2.5 of
\cite{dup}).
\end{remark}

\begin{remark}\label{strengthening-well-sep}
In the completely regular Hausdorff case,  the strengthening of the
known  versions with well-separating class consists in the form of
the rate function in terms of $\overline{\Lambda}(\cdot)$ (resp.
$\lim_{M\rightarrow+\infty}\underline{\Lambda}(\cdot_M)$) on
$\mathcal{A}$, and in terms of $\Lambda(\cdot)$ on $\mathcal{A}_-$;
the formula with $\underline{\Lambda}(\cdot)$ was known (see
Exercise 4.4.1 of \cite{dem}). When $\Lambda(\cdot)$ exists on the
whole well-separating class, (\ref{Stone-W-exp-tight-eq1}) was
proved in \cite{eic} for a sequence $(\mu_n^{1/n})$ and  $X$ metric
(the result is stated there for normal spaces but the author uses a
metric in the proof). Unlike \cite{eic}, Proposition
\ref{rate-funct} allows to prove (\ref{Stone-W-exp-tight-eq1})
without using the stability by finite minima; nevertheless, this
property is required to get the existence of $\Lambda(\cdot)$.
\end{remark}

\section{The case locally compact}\label{section-locally-compact}

In this section we assume that $X$ is  locally compact regular,  we
drop the exponential tightness hypothesis, and we
  consider an algebra $\mathcal{A}$  of real-valued continuous functions on $X$
  vanishing at
$\infty$, which separates the points and does not vanish identically
at any point of $X$.

 We first  recall some  basic topological facts. Here, the notion of local compactness is the one of
  \cite{kel}, that is,  each point has a compact neighborhood.
  This definition
  differs from others ones where it is asked that each point has an
  open neighborhood with compact closure (see for example
  \cite{eng}).
    Such a space need not be regular; however, it is known that
    $X$ is regular if and only if $X$ is completely regular, and
    this holds in particular when $X$ is Hausdorff. The  one-point compactification
     $\hat{X}$ of $X$ (where the neighborhood of $\infty$ are given by the complements of
     closed compact subsets of $X$) is
Hausdorff (resp. regular) if and only if $X$ is Hausdorff (resp.
regular) (\cite{kel}).

Let us look the following example: Take $X=\mathbb{N}\verb'\'\{0\}$
and $\mu_n$ the uniform probability measure on $\{1,...,n\}$ for all
$n\in\mathbb{N}\verb'\'\{0\}$;
 then, large deviations
holds with rate function $J=0$, so that $\Lambda(h)=\sup_{x\in X}
h(x)\ge 0$ for all $h\in\mathcal{A}$, which implies
\[\sup_{h\in
\mathcal{A}}\{-\Lambda(h)\}\ge\sup_{x\in
X,h\in\mathcal{A}}\{h(x)-\Lambda(h)\}.\]

 The next theorem  shows that the above inequality is
 in general   sufficient (and necessary in the Hausdorff case) to get large
deviations with a rate function $J$ satisfying a condition weaker
than the tightness (namely (\ref{Stone-Wei-no-tight-eq0})), and having still the form
(\ref{Stone-Wei-no-tight-eq2}); indeed,  $J$ is tight if and only if
the L.H.S. of (\ref{Stone-Wei-no-tight-eq0}) equals $+\infty$.
Theorem \ref{Stone-Wei-no-tight} supplies a broad class of examples
where the classical version of Bryc's theorem does not work, since
any net of Borel probability measures on any regular locally compact
space satisfying a large deviation principle with no-tight rate
function, will satisfy the  hypotheses.

\begin{theorem}\label{Stone-Wei-no-tight}
 Let  $X$ be locally compact regular,  let
  $\mathcal{A}$ be an algebra
of real-valued continuous functions on $X$
  vanishing at
$\infty$, which separates the points and does not vanish identically
at any point of $X$, and assume that
 $\Lambda(\cdot)$ exists on $\mathcal{A}$. The following conclusions
 hold.
\begin{itemize}
\item[(a)]  $(\mu_{\alpha})$ satisfies a vague large deviation principle with powers
$(t_{\alpha})$ and rate function
\begin{equation}\label{Stone-Wei-no-tight-eq2}
J(x)=\sup_{h\in\mathcal{A}} \{h(x)-\Lambda(h)\}\ \ \ \ \ \
\textnormal{for all $x\in X$}.
\end{equation}
 If moreover
 \begin{equation}\label{Stone-Wei-no-tight-eq1}
\sup_{h\in\mathcal{A}}\{-\Lambda(h)\}\ge \sup_{x\in
X,h\in\mathcal{A}}\{h(x)-\Lambda(h)\},
\end{equation} then the  large deviation principle holds with the same rate
function, and
\begin{equation}\label{Stone-Wei-no-tight-eq0}
\sup_{K\in\mathcal{K}}\inf_{x\in X\verb'\'K}J(x)\ge\sup_{x\in
X}J(x).
\end{equation}
\item[(b)] If  $X$ is Hausdorff and $(\mu_{\alpha})$ satisfies a large deviation
principle with powers $(t_{\alpha})$, then the rate function is
given by (\ref{Stone-Wei-no-tight-eq2}), and
 (\ref{Stone-Wei-no-tight-eq0}) implies
(\ref{Stone-Wei-no-tight-eq1}).
\item[(c)] $(\mu_{\alpha})$ is
 exponentially tight with respect to $(t_{\alpha})$ if and only if
 \[\sup_{h\in\mathcal{A}}\{-\Lambda(h)\}=+\infty.\]
\end{itemize}
\end{theorem}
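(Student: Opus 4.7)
The strategy is to reduce everything to Theorem \ref{Stone-W-exp-tight} applied on the one-point compactification $\hat{X}=X\cup\{\infty\}$, which is compact regular since $X$ is locally compact regular. Each $h\in\mathcal{A}$ (bounded, since it vanishes at infinity) extends continuously to $\hat{h}\in\mathcal{C}(\hat{X})$ by $\hat{h}(\infty)=0$, and each $\mu_\alpha$ extends to a Borel probability $\hat{\mu}_\alpha$ on $\hat{X}$ by $\hat{\mu}_\alpha(\{\infty\})=0$, so that $\hat{\mu}_\alpha^{t_\alpha}(e^{\hat h/t_\alpha})=\mu_\alpha^{t_\alpha}(e^{h/t_\alpha})$. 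The unital algebra $\mathcal{B}=\mathcal{A}+\mathbb{R}$ separates the points of $\hat{X}$ (the points of $X$ by assumption, and each $x\in X$ from $\infty$ via some $h\in\mathcal{A}$ with $h(x)\neq 0=\hat h(\infty)$, available because $\mathcal{A}$ does not vanish identically at $x$) and consists of bounded functions, so $\mathcal{B}_{ba}=\mathcal{B}$; a direct computation gives $\hat{\Lambda}(h+c)=\Lambda(h)+c$ on $\mathcal{B}$. Compactness of $\hat{X}$ yields automatic exponential tightness, and regularity makes $\hat X_0=\hat X$; Theorem \ref{Stone-W-exp-tight} then supplies an LDP on $\hat{X}$ with rate function
\[
\hat{J}(x)=\sup_{g\in\mathcal{A},\,c\in\mathbb{R}}\{g(x)+c-\Lambda(g)-c\}=\sup_{g\in\mathcal{A}}\{g(x)-\Lambda(g)\},
\]
so $\hat{J}|_X=J$ and $\hat{J}(\infty)=\sup_{g\in\mathcal{A}}\{-\Lambda(g)\}$.

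For part (a), the vague lower bound on an open $G\subset X$ is immediate since $G$ is open in $\hat X$ and $\hat\mu_\alpha(G)=\mu_\alpha(G)$; the upper bound on a compact $K\subset X$ follows by applying the $\hat X$-upper bound to $\overline{K}^{\hat X}$, which by local compactness plus regularity is contained in a closed compact subset of $X$. Under (\ref{Stone-Wei-no-tight-eq1}) one has $\hat J(\infty)\ge\sup_{x\in X}J(x)$; for any closed $F\subset X$, $F\cup\{\infty\}$ is closed in $\hat X$ (its complement $X\setminus F$ being open in both $X$ and $\hat X$), hence
\[
\limsup\mu_\alpha^{t_\alpha}(F)=\limsup\hat\mu_\alpha^{t_\alpha}(F\cup\{\infty\})\le\max\bigl(\sup_F e^{-J},\,e^{-\hat J(\infty)}\bigr)=\sup_F e^{-J},
\]
using $\hat J(\infty)\ge\sup_X J\ge\inf_F J$; this promotes the vague LDP to a full LDP. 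Finally, lower semi-continuity of $\hat J$ at $\infty$ gives $\sup_{K\in\mathcal{K}}\inf_{x\notin K}J(x)\ge\hat J(\infty)\ge\sup_x J(x)$, which is (\ref{Stone-Wei-no-tight-eq0}).

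For (b), $X$ Hausdorff implies $X$ completely regular, so the LDP rate function is unique and, by part (a), equals $J$ given by (\ref{Stone-Wei-no-tight-eq2}). For (\ref{Stone-Wei-no-tight-eq0})$\Rightarrow$(\ref{Stone-Wei-no-tight-eq1}): set $M=\sup_x J(x)$ and fix $\varepsilon>0$; by (\ref{Stone-Wei-no-tight-eq0}) there exists a compact $K_\varepsilon$ (closed by Hausdorffness) with $\{J\le M-\varepsilon\}\subset K_\varepsilon$, so $X\setminus K_\varepsilon\subset\{J\ge M-\varepsilon\}$. The LDP upper bound on the closed set $\{J\ge M-\varepsilon\}$ then yields $\limsup\mu_\alpha^{t_\alpha}(X\setminus K_\varepsilon)\le e^{-(M-\varepsilon)}$. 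Because $\hat X$ is regular, the canonical formula of Lemma 4.1.4 of \cite{dem} gives $\hat J(\infty)=-\log\inf_V\limsup\hat\mu_\alpha^{t_\alpha}(V)$ over open neighborhoods $V$ of $\infty$; specializing to $V=\hat X\setminus K_\varepsilon$ produces $\hat J(\infty)\ge M-\varepsilon$, whence $\sup_{h\in\mathcal{A}}\{-\Lambda(h)\}=\hat J(\infty)\ge M$, which is (\ref{Stone-Wei-no-tight-eq1}).

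For (c), exponential tightness supplies for every $L$ a compact $K_L$ with $\limsup\mu_\alpha^{t_\alpha}(X\setminus K_L)<e^{-L}$, forcing $\hat J(\infty)=+\infty$ via the same canonical formula; conversely if $\hat J(\infty)=+\infty$, then for each $L>0$ the set $\{\hat J\le L\}$ is compact in $\hat X$ and misses $\infty$, so it is compact in $X$, and the $\hat X$-upper bound applied to the closed set $\{\hat J\ge L\}=\{J\ge L\}\cup\{\infty\}$ yields $\limsup\mu_\alpha^{t_\alpha}(X\setminus\{\hat J\le L\})\le e^{-L}$, establishing exponential tightness. The delicate point throughout is the bookkeeping between $X$ and $\hat X$ — extending functions and measures compatibly, using $F\cup\{\infty\}$ rather than $F$ as the correct $\hat X$-closed extension of closed $F\subset X$, and identifying $\hat J(\infty)=\sup_{h\in\mathcal{A}}\{-\Lambda(h)\}$ via $\hat X_0=\hat X$; once these are in place, all three parts flow from the single application of Theorem \ref{Stone-W-exp-tight} on $\hat X$.
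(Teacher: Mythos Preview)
Your approach is correct and shares the paper's core idea—pass to the one-point compactification $\hat X$, obtain an LDP there, and transfer back—but the implementation differs in a way worth noting. The paper first uses Stone--Weierstrass on $\hat X$ to show $\mathcal{A}$ is uniformly dense in $\mathcal{C}_0(X)$, then verifies by hand that $\mathcal{C}_0(X)+\mathbb{R}$ is an approximating class for $\hat X$ and invokes Corollaries~2 and~3 of \cite{com-JTP-05} to produce the LDP and the explicit rate-function formula on $\hat X$. You bypass those external references entirely by applying the paper's own Theorem~\ref{Stone-W-exp-tight} to $(\hat\mu_\alpha)$ on $\hat X$ with the separating unital algebra $\mathcal{A}+\mathbb{R}$: compactness of $\hat X$ makes exponential tightness trivial, and regularity of $\hat X$ (hence complete regularity, since $\hat X$ is locally compact) gives $\hat X_0=\hat X$, so the variational formula for $\hat J$ holds at every point, including $\infty$. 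This yields a more self-contained proof. For part~(b) the paper instead constructs an auxiliary function $\hat l$ on $\hat X$ and appeals to rate-function uniqueness there; your route through the $l_0$-formula for $\hat J(\infty)$ together with the closed level set $\{J\ge M-\varepsilon\}$ is equally valid and arguably more direct.

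One small point you should make explicit: because $\mathcal{A}\subset\mathcal{C}(X)$ separates points, $X$ is automatically Hausdorff, hence every compact $K\subset X$ is already closed in $X$ and therefore closed in $\hat X$. This is what turns your vague upper bound into the desired $\sup_{K}e^{-J}$ (your phrasing via $\overline{K}^{\hat X}$ otherwise only gives $\sup_{\overline K}e^{-J}$), and it also guarantees that the neighborhoods $\hat X\setminus K_\varepsilon$ you use in (b) and (c) are genuinely open in $\hat X$.
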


\begin{proof}
 Let $\phi:X\rightarrow\hat{X}$ be the canonical imbedding of $X$
in its one-point compactification $\hat{X}=X\cup\{\infty\}$, let
$\mathcal{C}_0(X)$ be the algebra of  continuous functions on $X$
  vanishing at
$\infty$,  and identify $\mathcal{C}_0(X)$ with the set of
continuous functions $h$ on $\hat{X}$ such that $h(\infty)=0$. The
hypotheses imply that the algebra $\mathcal{A}+\mathbb{R}$ separates
the points of $\hat{X}$. By the Stone-Weierstrass theorem in
$\hat{X}$, for each $g\in\mathcal{C}_0(X)$ and each $\varepsilon>0$,
there exists $h\in\mathcal{A}$ and $c\in\mathbb{R}$ such that
$\sup_{x\in \hat{X}}|g(x)-(h(x)+c)|\le\varepsilon$, hence
$\sup_{x\in X}|g(x)-h(x)|\le 2\varepsilon$ since
$g(\infty)=h(\infty)=0$. It follows that $\mathcal{A}$ is uniformly
dense in $\mathcal{C}_0(X)$, hence $\Lambda(\cdot)$ exists on
$\mathcal{C}_0(X)$.
 For each $x\in X$
and each $G\in\mathcal{G}$ containing $x$,
 there exists
$h_{G,x}\in\mathcal{C}_0(X)$ satisfying $1_{\{x\}}\le h_{G,x}\le
1_G$, so that $1_{\{x\}}\le e^{sh_{G,x}-s}\le 1_G\vee e^{-s}$ for
all $s>0$; in particular, when $G=X\verb'\'K$ for some
$K\in\mathcal{K}$, we have
 $1_{\{x\}}\le
e^{sh_{G,x}-s}\le 1_{\hat{X}\verb'\'K}\vee e^{-s}$. For each
$K\in\mathcal{K}$, there is a continuous function $h_{K,\infty}$ on
$\hat{X}$ such that $1_{\{\infty\}}\le h_{K,\infty}\le
1_{\hat{X}\verb'\'K}$, so that $1_{\{\infty\}}\le
e^{s(h_{K,\infty}-1)}\le 1_{\hat{X}\verb'\'K}\vee e^{-s}$ for all
$s>0$, with $h_{K,\infty}-1\in\mathcal{C}_0(X)$. Thus, the algebra
$\mathcal{B}=\mathcal{C}_0(X)+\mathbb{R}$ is an approximating class
for $\hat{X}$. Since
$\lim\phi[\mu_{\alpha}]^{t_{\alpha}}(e^{h/t_{\alpha}})$
 exists for all
$h\in\mathcal{B}$, it follows from Corollary 3 of \cite{com-JTP-05}
that $(\phi[\mu_{\alpha}])$ satisfies a large deviation principle
with powers $(t_{\alpha})$. Since $\mathcal{B}$ satisfies the
hypotheses of Corollary 2 of \cite{com-JTP-05}, the rate function
$\hat{J}$ is given by
\begin{equation}\label{Stone-Wei-no-tight-eq3}
\hat{J}(x)=\sup_{\{h\in\mathcal{B}:
h(x)=0\}}\{-\Lambda(h)\}=\sup_{h\in\mathcal{B}}\{h(x)-\Lambda(h)\}=
\sup_{h\in\mathcal{C}_0(X)}\{h(x)-\Lambda(h)\}
\end{equation}
\[=\sup_{h\in\mathcal{A}}
\{h(x)-\Lambda(h)\}\ \ \ \ \ \ \textnormal{for all $x\in \hat{X}$},
\]
where the last equality follows by noting that
 for each $h\in\mathcal{C}_0(X)$,  $\Lambda(h)=\lim\Lambda(h_i)$ for all
nets $(h_i)$ in $\mathcal{A}$ converging uniformly to $h$. In
particular
\begin{equation}\label{Stone-Wei-no-tight-eq3.1}
\hat{J}(\infty)=\sup_{h\in\mathcal{A}}\{-\Lambda(h)\}.
 \end{equation}
 The complete regularity of  $\hat{X}$ yields
\[
e^{-\hat{J}(\infty)}=
\inf_{K\in\mathcal{K}}\limsup\phi[\mu_{\alpha}]^{t_{\alpha}}
(\hat{X}\verb'\'K)\\=\inf_{K\in\mathcal{K}}\limsup\mu_{\alpha}^{t_{\alpha}}
(X\verb'\'K),
\]
which proves (c). Then, we have
\begin{equation}\label{Stone-Wei-no-tight-eq3.2}
\liminf\mu_{\alpha}^{t_{\alpha}}(G)=\liminf\phi[\mu_{\alpha}]^{t_{\alpha}}(G)\ge
\sup_{x\in G}e^{-\hat{J}(x)}\ \ \ \ \ \textnormal{for all
$G\in\mathcal{G}$},
\end{equation} and
\begin{equation}\label{Stone-Wei-no-tight-eq3.3}
\limsup\mu_{\alpha}^{t_{\alpha}}(K)=\limsup\phi[\mu_{\alpha}]^{t_{\alpha}}(K)
\le\sup_{x\in K}e^{-\hat{J}(x)}\ \ \ \ \ \ \textnormal{for all
$K\in\mathcal{K}$},
\end{equation}
and so, $(\mu_{\alpha})$ satisfies  a vague large deviation
principle with power $(t_\alpha)$ and  rate function
(\ref{Stone-Wei-no-tight-eq2}), since $J=\hat{J}_{\mid
 X}$. This proves the first assertion of (a).

 Assume moreover  that (\ref{Stone-Wei-no-tight-eq1}) holds.  By (\ref{Stone-Wei-no-tight-eq3}) and
 (\ref{Stone-Wei-no-tight-eq3.1}),
(\ref{Stone-Wei-no-tight-eq1}) is equivalent to
\begin{equation}\label{Stone-Wei-no-tight-eq4}
e^{-\hat{J}(\infty)}\le\inf_{x\in X} e^{-\hat{J}(x)},
\end{equation}
 hence for
each  $F\in\mathcal{F}$,
\[
\limsup\mu_{\alpha}^{t_{\alpha}}(F)=\limsup\phi[\mu_{\alpha}]^{t_{\alpha}}(F\cup\{\infty\})
\le\sup_{x\in F\cup\{\infty\}}e^{-\hat{J}(x)}\le\sup_{x\in
F}e^{-\hat{J}(x)},
\]which proves the large deviation
upper bounds with  $J$. The lower semi-continuity of $\hat{J}$ at
$\infty$, and  (\ref{Stone-Wei-no-tight-eq4}) imply
\[\sup_{x\in X}J(x)\le\hat{J}(\infty)=\sup_{K\in\mathcal{K}}\inf_{x\in
\hat{X}\verb'\'K}\hat{J}(x)=\sup_{K\in\mathcal{K}}\inf_{x\in
X\verb'\'K}J(x),\] and (\ref{Stone-Wei-no-tight-eq0}) holds. The
second assertion of (a) is proved.

Assume that $X$ is Hausdorff and   $(\mu_{\alpha})$ satisfies a
large deviation principle with powers $(t_\alpha)$.
 Then,
$\Lambda(\cdot)$ exists on $\mathcal{C}_0(X)$, and (as proved
before) $(\phi[\mu_{\alpha}])$ satisfies a large deviation principle
with powers $(t_{\alpha})$ and rate function $\hat{J}$
 defined in (\ref{Stone-Wei-no-tight-eq3}). Therefore,
 $(\mu_{\alpha})$ satisfies a vague large deviation principle
with power $(t_\alpha)$ and rate function $\hat{J}_{\mid X}$, and by
uniqueness of a vague rate function on a locally compact Hausdorff
space, the rate function coincides with $\hat{J}_{\mid X}$. This
proves the first conclusion of (b).  Assume  that
(\ref{Stone-Wei-no-tight-eq0}) holds, and define the function
$\hat{l}$ on $\hat{X}$ by
\[\hat{l}(x)=\left\{
\begin{array}{ll}
J(x) & \ \ \ \ \ \ \textnormal{if $x\neq\infty$}
\\
\sup_{y\in X}J(y)& \ \ \ \ \ \ \textnormal{if $x=\infty$}.
\end{array}
\right.
\]
If $\hat{l}(\infty)>\sup_{K\in\mathcal{K}}\inf_{x\in
\hat{X}\verb'\'K}\hat{l}(x)$, then
\[\hat{l}(\infty)>\sup_{K\in\mathcal{K}}\inf_{x\in
X\verb'\'K}\hat{l}(x)=\sup_{K\in\mathcal{K}}\inf_{x\in
X\verb'\'K}J(x),
\] which  contradicts
(\ref{Stone-Wei-no-tight-eq0}). Therefore, $\hat{l}$ is lower
semi-continuous at $\infty$,  and so  $\hat{l}$ is lower
semi-continuous on $\hat{X}$. The large deviations for
$(\mu_{\alpha}^{t_{\alpha}})$ with rate function $J$ implies for
each $F\in\mathcal{F}$ and each $K\in\mathcal{K}$ with $F\subset
X\verb'\'K$,
\[
\limsup\mu_{\alpha}^{t_{\alpha}}(F)=\limsup\phi[\mu_{\alpha}]^{t_{\alpha}}(F\cup\{\infty\})
\le\sup_{x\in F}e^{-J(x)}=\sup_{x\in
F\cup\{\infty\}}e^{-\hat{l}(x)}\]\[\le\sup_{x\in
X\verb'\'K}e^{-J(x)}\le\sup_{x\in
\hat{X}\verb'\'K}e^{-\hat{l}(x)}\le\liminf\mu_{\alpha}^{t_{\alpha}}(X\verb'\'K)=
\liminf\phi[\mu_{\alpha}]^{t_{\alpha}}(\hat{X}\verb'\'K).
\]
Together with (\ref{Stone-Wei-no-tight-eq3.2}) and
(\ref{Stone-Wei-no-tight-eq3.3}), this shows that
$(\phi[\mu_{\alpha}])$ satisfies a large deviation principle with
powers $(t_{\alpha})$ and rate function $\hat{l}$, hence
$\hat{l}=\hat{J}$ and so $\hat{J}(\infty)=\sup_{x\in X}J(x)$, which
implies  (\ref{Stone-Wei-no-tight-eq1}) by
(\ref{Stone-Wei-no-tight-eq3}) and (\ref{Stone-Wei-no-tight-eq3.1}).
\end{proof}

The two following corollaries are the analogues of Corollaries
 \ref{rate-funct-Polish} and \ref{general-Prohorov-LDP} respectively,
for locally compact spaces when the tightness  fails (exponential or
of the rate function).

\begin{corollary}\label{loc-compact-rate-funct}
 Let  $X$ be  locally compact Hausdorff. If
   $(\mu_\alpha)$ satisfies a large deviation principle with powers
$(t_\alpha)$,  then  the  rate function $J$ is given by
\[
J(x)=\sup_{h\in\mathcal{A}}\{h(x)-\Lambda(h)\}\ \ \ \ \ \
\textit{for all $x\in X$},
\]
where $\mathcal{A}$ is any algebra of real-valued continuous
functions on $X$ vanishing at $\infty$, which separates the points
and does not vanish identically at any point of $X$.
\end{corollary}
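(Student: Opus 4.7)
The plan is to reduce the corollary to a direct application of Theorem \ref{Stone-Wei-no-tight}(b). The content of the corollary is that one need not assume separately the existence of $\Lambda(\cdot)$ on $\mathcal{A}$, since it follows automatically from the large deviation principle. So the whole task splits into two easy verifications: that the topological hypotheses of Theorem \ref{Stone-Wei-no-tight} are met, and that $\Lambda(\cdot)$ exists on $\mathcal{A}$.

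For the first point, I would invoke the topological remark at the beginning of Section \ref{section-locally-compact}: a locally compact Hausdorff space is regular (indeed completely regular), so $X$ is locally compact regular as required by Theorem \ref{Stone-Wei-no-tight}. The algebra $\mathcal{A}$ is given satisfying the hypotheses of that theorem by the statement of the corollary.

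For the existence of $\Lambda(\cdot)$ on $\mathcal{A}$, I would observe that every $h\in\mathcal{A}$ is continuous and vanishes at $\infty$, hence bounded. Consequently the tail condition (\ref{tail}) is trivially fulfilled, because for $M$ larger than $\sup_{x\in X}e^{h(x)}$ the set $\{e^h\ge M\}$ is empty and the corresponding $\limsup$ vanishes. By the discussion immediately following Proposition \ref{extension-Varadh}, the large deviation bounds plus the tail condition yield the existence of $\Lambda(h)$ for every such $h$, and in particular for every $h\in\mathcal{A}$.

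With all hypotheses of Theorem \ref{Stone-Wei-no-tight}(b) verified, its conclusion gives exactly the formula $J(x)=\sup_{h\in\mathcal{A}}\{h(x)-\Lambda(h)\}$ for all $x\in X$, proving the corollary. There is essentially no obstacle in this argument; the only point requiring attention is that boundedness of the elements of $\mathcal{A}$, rather than any sort of exponential tightness of $(\mu_\alpha)$, is what guarantees that Varadhan-type existence of $\Lambda$ is available here.
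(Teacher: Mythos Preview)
Your proof is correct and follows essentially the same route as the paper: first establish that $\Lambda(\cdot)$ exists on $\mathcal{A}$ via Varadhan's theorem (the paper cites the tightness-free version from \cite{com-TAMS-03}, while you invoke the discussion after Proposition~\ref{extension-Varadh} together with boundedness of functions vanishing at infinity, which amounts to the same thing), and then apply Theorem~\ref{Stone-Wei-no-tight}(b).
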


\begin{proof}
The large deviation principle implying  the existence of
$\Lambda(\cdot)$ on $\mathcal{A}$ by the tightness-free version of
Varadhan's   theorem (\cite{com-TAMS-03}), the conclusion follows
from Theorem \ref{Stone-Wei-no-tight} (b).
\end{proof}

\begin{corollary}\label{local-compact-tight-free-Prohorov}
Let  $X$ be  locally compact regular. Let $\mathcal{A}$ be an
algebra of real-valued continuous functions on $X$ vanishing at
$\infty$, which separates the points and does not vanish identically
at any point of $X$. If
\[
\sup_{h\in\mathcal{A}}\{-\overline{\Lambda}(h)\}\ge \sup_{x\in
X,h\in\mathcal{A}}\{h(x)-\underline{\Lambda}(h)\},
\]
then
 $(\mu_\alpha)$ has a subnet $(\mu_\beta)$ satisfying a large
deviation principle with powers $(t_\beta)$ and  rate function
\[
J(x)=\sup_{h\in\mathcal{A}}
\{h(x)-\Lambda^{(\nu_\beta^{t_\beta})}(h)\}\ \ \ \ \ \
\textnormal{for all $x\in X$};\]  moreover we have
\[\sup_{K\in\mathcal{K}}\inf_{x\in X\verb'\'K}J(x)\ge\sup_{x\in
X}J(x).\]
\end{corollary}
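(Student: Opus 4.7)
The plan mirrors the proof of Corollary \ref{general-Prohorov-LDP}, but with Theorem \ref{Stone-Wei-no-tight}(a) replacing Theorem \ref{Stone-W-exp-tight} in the final step.

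First, I would set $\Lambda_\alpha(h)=\log\mu_\alpha^{t_\alpha}(e^{h/t_\alpha})$ for each $h\in\mathcal{A}$, so that $(\Lambda_\alpha)$ is a net in the compact product space $[-\infty,+\infty]^{\mathcal{A}}$. Extracting a pointwise convergent subnet $(\Lambda_\beta)$ yields a subnet $(\mu_\beta)$ of $(\mu_\alpha)$ along which $\Lambda^{(\mu_\beta^{t_\beta})}(h)$ exists in $[-\infty,+\infty]$ for every $h\in\mathcal{A}$.

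Next, I would verify that the hypothesis (\ref{Stone-Wei-no-tight-eq1}) of Theorem \ref{Stone-Wei-no-tight}(a) holds for the subnet. Since the $\liminf$ and $\limsup$ along any subnet are squeezed between those of the original net, for each $h\in\mathcal{A}$ we have $\underline{\Lambda}(h)\le\Lambda^{(\mu_\beta^{t_\beta})}(h)\le\overline{\Lambda}(h)$. Combining this with the assumed inequality yields the chain
\[
\sup_{h\in\mathcal{A}}\{-\Lambda^{(\mu_\beta^{t_\beta})}(h)\}\ge\sup_{h\in\mathcal{A}}\{-\overline{\Lambda}(h)\}\ge\sup_{x\in X,\,h\in\mathcal{A}}\{h(x)-\underline{\Lambda}(h)\}\ge\sup_{x\in X,\,h\in\mathcal{A}}\{h(x)-\Lambda^{(\mu_\beta^{t_\beta})}(h)\},
\]
which is exactly the analogue of (\ref{Stone-Wei-no-tight-eq1}) for $(\mu_\beta)$.

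Finally, I would apply Theorem \ref{Stone-Wei-no-tight}(a) to the subnet $(\mu_\beta)$: the algebra $\mathcal{A}$ still meets all the standing hypotheses, $\Lambda^{(\mu_\beta^{t_\beta})}(\cdot)$ exists on $\mathcal{A}$, and the strengthened inequality just established plays the role of (\ref{Stone-Wei-no-tight-eq1}). This delivers the large deviation principle for $(\mu_\beta)$ with powers $(t_\beta)$ and rate function $J(x)=\sup_{h\in\mathcal{A}}\{h(x)-\Lambda^{(\mu_\beta^{t_\beta})}(h)\}$, together with the weak tightness property (\ref{Stone-Wei-no-tight-eq0}). No step is a serious obstacle here; the only non-routine check is the transfer of the inequality hypothesis from the full net to the subnet, and this is handled by the squeeze above.
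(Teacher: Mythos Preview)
Your proposal is correct and follows exactly the same approach as the paper: extract a subnet via compactness of $[-\infty,+\infty]^{\mathcal{A}}$ (the argument of Corollary~\ref{general-Prohorov-LDP}), then apply Theorem~\ref{Stone-Wei-no-tight}(a). Your explicit squeeze $\underline{\Lambda}(h)\le\Lambda^{(\mu_\beta^{t_\beta})}(h)\le\overline{\Lambda}(h)$ is precisely what the paper means when it says ``the hypothesis implies (\ref{Stone-Wei-no-tight-eq1}) with $\Lambda^{(\nu_\beta^{t_\beta})}(h)$ in place of $\Lambda(h)$''.
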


\begin{proof}
The same argument as for Corollary \ref{general-Prohorov-LDP} gives
the existence of $\Lambda^{(\nu_\beta^{t_\beta})}(\cdot)$ on
$\mathcal{A}$ for some subnet $(\nu_\beta^{t_\beta})$. The
conclusion follows from Theorem  \ref{Stone-Wei-no-tight} (a) since
the hypothesis implies (\ref{Stone-Wei-no-tight-eq1}) with
$\Lambda^{(\nu_\beta^{t_\beta})}(h)$ in place of $\Lambda(h)$.
\end{proof}

\begin{remark}
The assumption of regularity in Theorem \ref{Stone-Wei-no-tight} is
necessary to have  the complete regularity of the one-point
compactification; if one drop it, then we fall in the general case
of the preceding section. The Hausdorff assumption in (b) is only
required  to ensure the uniqueness of the rate function for  vague
large deviations; consequently, the same conclusions hold for any
regular locally compact space satisfying this property. The
assumption that $\mathcal{A}$ does not vanish identically at any
point of $X$ is crucial in the proof: it ensures that
$\mathcal{A}+\mathbb{R}$ separates the points of $\hat{X}$, which
allows to prove the first assertion of (a), part on which is built
the rest of the proof.
\end{remark}

\begin{remark}
Under the hypotheses of Theorem \ref{Stone-Wei-no-tight} and when
$X$ is not compact, $(\ref{Stone-Wei-no-tight-eq1})$ is equivalent
to
\begin{equation}\label{LDP-Alex-compactif-eq6}
\sup_{h\in\mathcal{A}}\{-\Lambda(h)\}=\sup_{x\in
X,h\in\mathcal{A}}\{h(x)-\Lambda(h)\}.
\end{equation}
In particular the exponential tightness holds if and only if
\[\sup_{x\in X} J(x)=+\infty.\] Indeed, assume that
$(\ref{Stone-Wei-no-tight-eq1})$ holds. The set
$\{\hat{J}>\sup_{x\in X} \hat{J}(x)\}$ is open in $\hat{X}$ and
since $\{\infty\}$ is not open when  $X$ is not compact, we have
$\hat{J}(\infty)\le\sup_{x\in X} \hat{J}(x)$; since the converse
inequality holds by  $(\ref{Stone-Wei-no-tight-eq1})$, we have
$\hat{J}(\infty)=\sup_{x\in X}\hat{J}(x)$, which is equivalent to
(\ref{LDP-Alex-compactif-eq6}).
\end{remark}

\section*{Acknowledgments}
 This work
has been supported by DICYT-USACH grant No. 040533C.

\bibliographystyle{amsplain}

\end{document}